\newcommand\Id{\text{Id}}
\newcommand{\subalign}[1]{%
  \vcenter{%
    \Let@ \restore@math@cr \default@tag
    \baselineskip\fontdimen10 \scriptfont\tw@
    \advance\baselineskip\fontdimen12 \scriptfont\tw@
    \lineskip\thr@@\fontdimen8 \scriptfont\thr@@
    \lineskiplimit\lineskip
    \ialign{\hfil$\m@th\scriptstyle##$&$\m@th\scriptstyle{}##$\hfil\crcr
      #1\crcr
    }%
  }%
}
\DeclareMathOperator{\overlinecone}{\overline{cone}}
\DeclareMathOperator{\len}{Length}
\DeclareMathOperator{\Diff}{Diff}
\DeclareMathOperator{\Imm}{Imm}
\DeclareMathOperator{\dist}{dist}
\DeclareMathOperator{\distS}{dist^{\mathcal{S}}}
\theoremstyle{plain}
\newtheorem{theorem}{Theorem}
\newtheorem{lemma}[theorem]{Lemma}
\newtheorem{assumption}{Assumption}
\theoremstyle{remark}
\setlist{topsep=0pt,itemsep=0ex,partopsep=0ex,parsep=0ex}
\newlist{subassumption}{enumerate}{10}
\setlist[subassumption]{label*=(\alph*),ref=\theassumption\alph*}
\newlist{experiment}{enumerate}{10}
\setlist[experiment]{label*=(\Alph*),ref=\Alph*}
\crefname{figure}{Figure}{Figures}
\crefname{theorem}{Theorem}{Theorems}
\crefname{lemma}{Lemma}{Lemma}
\crefname{assumption}{assumption}{assumptions}
\crefname{subassumptioni}{assumption}{assumptions}
\crefname{experimenti}{test problem}{test problems}
\Crefname{experimenti}{Test problem}{Test problems}
\renewcommand{\todo}[2][]{\tikzexternaldisable\@todo[#1]{#2}\tikzexternalenable}
\colorlet{light gray}{white!75!black}
\colorlet{gray}{white!50!black}
\colorlet{dark gray}{white!25!black}
\definecolor{matlab1}{RGB}{0,114,189}
\definecolor{matlab2}{RGB}{217,83,25}
\definecolor{matlab3}{RGB}{237,177,32}
\definecolor{matlab4}{RGB}{126,47,142}
\definecolor{matlab5}{RGB}{119,172,48}
\definecolor{matlab6}{RGB}{77,190,238}
\definecolor{matlab7}{RGB}{162,20,47}
\colorlet{purple}{blue!50!red!50!white!40!matlab4}
\colorlet{blue}{matlab1!90!black}
\colorlet{green}{matlab5!80!green!85!black}
\colorlet{red}{matlab2!90!black}
\colorlet{yellow}{matlab3!90!black}
\colorlet{orange}{matlab2}
\tikzset{
    ultra thin/.style = {line width=0.1pt},
    very thin/.style =  {line width=0.2pt},
    thin/.style =       {line width=0.4pt},
    semithick/.style =  {line width=0.6pt},
    thick/.style =      {line width=0.8pt},
    very thick/.style = {line width=1.2pt},
    ultra thick/.style ={line width=2.4pt},
}
\tikzstyle{tiny}   = [inner sep = 0, minimum size = 1pt]
\tikzstyle{small}  = [inner sep = 0, minimum size = 2pt]
\tikzstyle{medium} = [inner sep = 0, minimum size = 3pt]
\tikzstyle{large}  = [inner sep = 0, minimum size = 4pt]
\tikzstyle{huge}   = [inner sep = 0, minimum size = 5pt]
\tikzstyle{dashed} = [dash pattern = on 4pt off 2pt]
\tikzstyle{dash dotted} = [dash pattern = on 3pt off 1pt on 1pt off 1pt]
\pgfplotsset{
	line 1/.style = {
		mark=circle*, 
		mark size=1.5pt,
		blue,
		mark options={fill=white},
	},
	line 2/.style ={
		mark=triangle*,
		mark size=2.0pt,
		green,
		mark options={fill=white},
	},
	line 3/.style ={
		mark=circle*, 
		mark size=1.5pt,
		yellow,
		mark options={fill=yellow!70!white},
	},
	line 4/.style = {
		mark=triangle*,
		mark size=2.0pt,
		red,
		mark options={fill=red!70!white},
	},
	line 5/.style = {
		mark = ast,
		mark size=2.5pt, 
		black,
	},
	smooth line 1/.style = {smooth,blue},
	smooth line 2/.style = {smooth,green},
	smooth line 3/.style = {smooth,yellow},
	smooth line 4/.style = {smooth,red},
}
\pgfplotsset{compat=newest}
\pgfplotsset{
	convergence plot/.style = {
		xmode=log,
		ymode=log,
		at = {(0,0)},
		xmin = 1e1,
		xmax = 1e4,
		%
		%
		xmajorgrids,
		ymajorgrids,
		xminorgrids=false,
		yminorgrids=false,
		grid style = {thin,light gray},
		%
		%
		axis lines = left,
		axis line style = {draw=none},
		%
		%
		minor tick style = {thin,light gray},
		minor tick length = 1.5pt,
		major tick length = 3pt,
		major tick style = {draw=none},
		tick align = inside,
		max space between ticks=20,
		%
		%
		xlabel = {$h^{-1}$},
		%
		%
		legend pos = outer north east,
		legend style = {
			draw=none,
		},
		legend cell align={left},
		width = 2.7cm,
		height = 3.6cm,
	},
	geodesic/.style = {
		mesh,
		patch,
		patch type=cubic spline,
		line cap=round,
		point meta=explicit,
	},
	title style={at={(0.5,0)},anchor=north,yshift=-1.2cm},
	every axis/.append style={
		execute at begin axis={
			\pgfplotsset{
				legend style={
					font=\footnotesize
				},
				every x tick label/.append style={font=\footnotesize},
				every y tick label/.append style={font=\footnotesize},
			}
		}
	},
	scale only axis,
	every axis plot/.append style={thick},
	cycle list name=high contrast marks,
	anchor=center,
	colormap={hues}{
        color(0cm)=(matlab7)
        color(2cm)=(matlab2)
        color(4cm)=(matlab3)
        color(7cm)=(matlab5)
        color(10cm)=(matlab1)
        color(12cm)=(matlab4)
        color(14cm)=(matlab7)
    }
}
\title{A PDE-based Method for Shape Registration}
\author{
    Esten Nicolai Wøien\thanks{Department of Mathematical Sciences, Norwegian University of Science and Technology, Trondheim, Norway} \thanks{\url{esten.n.woien@ntnu.no}}
    \hspace{1cm}
    Markus Grasmair$^*$\thanks{\url{markus.grasmair@ntnu.no}}}
\date{March, 2021}
\begin{document}

\maketitle


\begin{abstract}
	In the square root velocity framework, the computation of shape space distances and the registration of curves requires solution of a non-convex variational problem. In this paper, we present a new PDE-based method for solving this problem numerically. The method is constructed from numerical approximation of the Hamilton-Jacobi-Bellman equation for the variational problem, and has quadratic complexity and global convergence for the distance estimate. In conjunction, we propose a backtracking scheme for approximating solutions of the registration problem, which additionally can be used to compute shape space geodesics. The methods have linear numerical convergence, and improved efficiency compared previous global solvers.
\end{abstract}

\section{Introduction}

A large number of applications require the manipulation
and mathematical or statistical analysis of geometric objects
in general and curves in particular.
Examples from mathematical image processing are
segmentation, where one wants to find and classify different
objects within an image based on their outlines (see e.g.~\cite{Kass88}
for a classical model), or object tracking (see \cite{tracking}), where one wants to follow the same object over a sequence of consecutive frames. Other examples include the analysis of shapes of proteins \cite{Liu11}, modelling and analysis of computer animations \cite{Bauer17,Celledoni16}, or also inverse problems concerning the detection of shapes from indirect measurements \cite{Eckhardt19}.

In order to perform these tasks, it is necessary to have
a well-defined and easily computable notion of distance between
curves at hand.
One important example is the \emph{Square Root Velocity} (SRV) distance
originally introduced in \cite{srvt2010,Mio2007}
(see Section~\ref{se:prelim} below for a precise definition),
which can be interpreted as a measure for the
bending and stretching energy that is required for transforming
one curve into another.
For parametrised curves, this distance is defined by applying first
a non-linear transformation---the \emph{Square Root Velocity transform}
(SRVT)---to the involved curves, which maps
them onto the unit sphere in $L^2$. Then, the distance of the
curves is defined as the unit sphere distance of their SRV transformations.
Even more, this setting makes it possible to regard the space
of all parametrised curves as a manifold with a Riemannian structure that
is inherited from the unit sphere in $L^2$.
In particular, one can define geodesics between parametrised
curves, that is, optimal deformations of one curve into another.

However, in many applications we are only interested in the image of
a curve, but not the concrete parametrisation.
We thus rather require a distance between \emph{shapes},
that is, equivalence classes of curves modulo reparametrisations.
Within the SRV framework, this can be achieved by defining the
distance between two shapes as the infimum
of the distance between all curves within their equivalence class.
Or, given two parametrised curves $c_1$ and $c_2$, we define the distance
between their shapes as the infimum of the distance between
all reparametrisations of $c_1$ and $c_2$.
It can be shown that this
infimum is positive for all distinct shapes
and thus defines a distance on the set of all shapes.
Moreover, it is again possible to view the space of all
shapes as a Riemannian manifold and thus to define geodesic
between shapes.
We refer to~\cite{Srivastava16} for a detailed introduction
into shape analysis within the SRV framework; a short overview
can also be found in \cite{bauer2020intrinsic}.

The actual computation of the shape distance
and of geodesics, though, requires the solution of an
optimisation problem over the space of all reparametrisations.
This problem has the form
\begin{equation}\label{eq:intromin}
  \inf_{\varphi_1,\,\varphi_2} \int_I F(\varphi_1(t),\varphi_2(t),\varphi_1'(t),\varphi_2'(t))\,dt,
\end{equation}
where the infimum is computed over all orientation preserving
reparametrisations of the unit interval $I = [0,1]$.
Here the integrand $F$ depends on the SRVTs of the curves
$c_1$ and $c_2$ one wants to compare.
Due to invariance properties of the SRV distance,
it is sufficient to compute the minimum in~\eqref{eq:intromin}
only with respect to one of
the diffeomorphisms, e.g.~w.r.t.~$\varphi_1$ while
leaving $\varphi_2$ constant equal to $\Id$.
This reduces the dimensionality of the problem,
but one is still left with an optimisation problem
over a function space.

For the numerical solution, there are two main approaches:
gradient based methods and dynamical programming.
In the dynamic programming approach
introduced in~\cite{Mio2007} (see also~\cite{younes1998} for a similar numerical
approach for a different shape distance), one approximates the diffeomorphism
$\varphi_1$ by a piecewise linear approximation with nodal points and nodal values
within a fixed partition $0 = t_0 < t_1 < \ldots < t_N = 1$ of the unit
interval. The resulting discrete optimisation problem is then
solved by a dynamic programming algorithm.
Without further modifications, this algorithm has a
time complexity of $O(N^4)$ and thus is not useful for practical
applications.
A significant speed-up is possible, though, by limiting
the set of possible slopes for the linear approximations of $\varphi_1$.
In fact, a method with complexity $O(N^3)$ has already been proposed in~\cite{Mio2007}.
Even more, a variant with complexity $O(N)$ has been presented in~\cite{dogan2016}
(see also \cite{dogan2015}), which is an iterative based on an adaptive, local refinement
of the search grid for the dynamic program.

Gradient based methods usually work on a finite dimensional
approximation of the space of all reparametrisations,
e.g.~using B-splines or trigonometrical functions.
The optimisation problem~\eqref{eq:intromin} is then
rephrased as a problem for the basis coefficients.
This is a finite dimensional optimisation problem,
which can, in principle, be solved with standard methods like
gradient descent or quasi-Newton methods.
One difficulty is the constraint that the functions
$\varphi$ we are optimising over are orientation preserving
diffeomorphisms and thus monotonically increasing.
Thus one has a positivity constraint for $\varphi'$,
which is difficult to handle numerically.
Thus~\cite{Huang16} rephrase the problem in terms of
$\gamma^2 := \varphi'$, which yields an optimisation problem
with the single equality constraint that $\lVert\gamma\rVert_{L^2}^2 = 1$.

We note here, though, that the optimisation problem~\eqref{eq:intromin} is
typically highly non-convex, as the reparametrisations appear as arguments
of the curves or their SRVTs. Thus it is highly likely
that there is a large number of local minimisers and
other critical point.
Indeed, an example of such a situation is shown in
our numerical examples in Section~\ref{se:localsols}.
Since gradient based methods are local,
it is therefore necessary to initialise the iteration
with a sufficiently good initial guess of the solution.
The same holds for the adaptively refined dynamic program
suggested in~\cite{dogan2016}.
If the initialisation of that method is too coarse,
it can happen that the refinement strategy is never
able to find the true global minimum.
\medskip

In this paper, we want to present, and analyse, an
alternative approach to the solution of~\eqref{eq:intromin}
which is based on a formulation as a continuous dynamic program.
This formulation allows us to define a continuous \emph{value function}
$u \colon I \times I \to \mathbb{R}$, where $u(x_1,x_2)$
measures the minimal (partial) cost of a reparametrisation
satisfying $\varphi(x_1) = x_2$.
In particular, $u(1,1)$ is precisely the value of the optimisation
problem~\eqref{eq:intromin}.
A precise definition of $u$ is given in~\eqref{eq:value} below.
This value function $u$ has been shown in~\cite{calder2013} to satisfy, in the
viscosity sense, the associated Hamilton--Jacobi--Bellman equation,
which is a hyperbolic
PDE with boundary values given for $x_1 = 0$ and $x_2 = 0$.
Moreover, convergent numerical schemes for the solution of that PDE have
been proposed in~\cite{calder2017,calder2013,thawinrak2017}.

The main contribution of this paper is a generalisation
of these schemes, which is closer in spirit to the definition
of $u$ by means of a dynamical program. In particular, our
approach allows it to recover the optimal reparametrisation
in a natural way by solving an ODE.
Since the value function is defined by means of the dynamic
programming principle, the resulting numerical method
will be globally convergent.
At the same time, the formulation as a PDE allows the
numerical schemes to have a time complexity of only $O(N^2)$.
In Section~\ref{se:prelim}, we will formulate the necessary
mathematical background in shape analysis and the SRVT.
The value function and the Hamilton--Jacobi--Bellman equation
are introduced in Section~\ref{se:dynprog}. In Sections~\ref{sec:schemes}
and~\ref{se:proposed}, we will first introduce the general numerical
framework together with a convergence analysis, and then propose
concrete numerical schemes. Then we will discuss
the recovery of the optimal reparametrisation from the value function
and the construction of geodesics in shape space in Section~\ref{se:backtracking}.
Finally, we will present numerical experiments in Section~\ref{se:experiments}.

\section{Preliminaries}
\label{se:prelim}

In the following, we will provide a brief introduction into shape analysis
using the square-root-velocity-transform.
To that end, let $I = [0,1]$ be the unit interval and $d \in \mathbb{N}$, and denote by
\[
\Imm(I;\mathbb{R}^d) := \bigl\{ c \in C^1(I;\mathbb{R}^d) : \lvert \dot{c}(t) \rvert > 0 \text{ for all } t \in I\bigr\}
\]
the space of all $C^1$ immersions of $I$ in $\mathbb{R}^d$.
We define the (scaled) \emph{Square-Root-Velocity-Transform} (SRVT)
\[
Q \colon \Imm(I;\mathbb{R}^d) \to C(I;\mathbb{R}^d\setminus\{0\})
\]
as
\begin{equation}\label{eq:SRVT}
q(t) = Q(c)(t) := \frac{1}{\sqrt{\len(c)}}\frac{\dot{c}(t)}{\sqrt{\lvert \dot{c}(t)\rvert}},
\end{equation}
where
\[
\len(c) := \int_I \lvert \dot{c}(t)\rvert\,dt
\]
denotes the length of the curve $c$.
Noting that
\[
\int_I \lvert q_i(t)\rvert^2\,dt = \int_I \frac{\lvert \dot{c}_i(t)\rvert}{\len(c_i)}\,dt = 1,
\]
we see that, actually, the SRVT maps a curve to an element
of the unit sphere in $L^2(I;\mathbb{R}^d)$.
The (scaled) \emph{Square-Root-Velocity} (SRV) distance between $c_1$ and $c_2$ is now defined as the geodesic distance between $q_1$ and $q_2$, that is,
\begin{equation}\label{eq:dist2}
\dist(c_1,c_2) := \arccos\Bigl(\int_I \langle q_1(t),q_2(t)\rangle\,dt\Bigr).
\end{equation}

Obviously, the SRV distance is translation invariant. Moreover,
due to the scaling by the square root of the length of the curves,
it is easy to see that it is scale invariant, that is,
\[
\dist(\lambda_1 c_1,\lambda_2 c_2) = \dist(c_1,c_2)
\]
for all $\lambda_1$, $\lambda_2 > 0$.
Since $\dist$ is defined by means of a geodesic distance,
it also satisfies the triangle inequality.
As a consequence, it follows that $\dist$ is a metric on the \emph{pre-shape space}
$\Imm(I;\mathbb{R}^d) / G$, where $G$ denotes the group of translations
and scalings in $\mathbb{R}^d$.

In fact, one can show that it is possible to regard
$\Imm(I;\mathbb{R}^d)/G$ as a Riemannian manifold
for which $\dist$ is the geodesic distance.
This also makes it possible to construct geodesics between
curves:
Consider $c_1$, $c_2 \in \Imm(I;\mathbb{R}^d) / G$ 
with SRVTs $q_1 = Q(c_1)$ and $q_2 = Q(c_2)$.
If $q_1(t) \neq -q_2(t)$ for all \(t\in I\)
then the geodesic between $c_1$ and $c_2$ is given as
\begin{equation}\label{eq:geodesic}
\tau \mapsto c_\tau := Q^{-1}\bigl(w(1-\tau)q_1 + w(\tau) q_2\bigr) \in \Imm(I;\mathbb{R}^d)/G.
\end{equation}
Here $Q^{-1}$ is the inverse SRVT, which can be explicitly computed as
\[
Q^{-1}(q)(t) = \int_0^t \lvert q(t')\rvert q(t')\,dt',
\]
and
\[
  w(\tau) = \frac{\sin\bigl(\tau \dist(c_1,c_2)\bigr)}{\sin\bigl(\dist(c_1,c_2)\bigr)}.
\]
We note that~\eqref{eq:geodesic} still makes sense if $q_1(t) = -q_2(t)$ for some (though not all) $t \in I$. In that case, however, the resulting curves $c_\tau$ will not all be immersions.
\medskip

Next we define the \emph{shape space}
\[
\mathcal{S}(I;\mathbb{R}^d) := \Imm(I;\mathbb{R}^d) / (\Diff_+(I) \times G),
\]
where
\[
\Diff_+(I) = \bigl\{\varphi \in C^\infty(I) : \varphi(0) = 0,\ \varphi(1) = 1,\, \varphi'(t) > 0 \text{ for all } t\in I\bigr\}
\]
is the group of orientation preserving diffeomorphisms of $I$.
Given two shapes $[c_1]$, $[c_2] \in \mathcal{S}(I;\mathbb{R}^d)$, we
then define their distance as
\begin{equation}\label{eq:distS1}
\distS([c_1],[c_2]) := \inf_{\varphi_1,\,\varphi_2 \in\Diff_+(I)} \dist(c_1\circ \varphi_1,c_2\circ \varphi_2).
\end{equation}

In fact, it is possible to simplify this expression,
as the SRV distance is invariant under simultaneous reparametrisations 
in the sense that
\[
\dist(c_1,c_2) = \dist(c_1\circ\varphi,c_2\circ\varphi)
\qquad\text{ for all } \varphi \in \Diff_+(I).
\]
Thus we have that
\begin{equation}\label{eq:distS}
\distS([c_1],[c_2]) = \inf_{\varphi \in \Diff_+(I)} \dist(c_1\circ \varphi,c_2).
\end{equation}
Again, one can show that this distance is induced by a Riemannian
metric on the shape space $\mathcal{S}(I;\mathbb{R}^d)$,
which in turn makes it possible to define geodesics between
certain shapes.
If the infimum in~\eqref{eq:distS} is attained at a diffeomorphism
$\varphi_{\text{opt}}$, then the geodesic between the shapes $[c_1]$
and $[c_2]$ is the equivalence class of the geodesic
between $c_1 \circ \varphi_{\text{opt}}$ and $c_2$.
Explicitly, this is given as
\[
\tau \mapsto \bigl[Q^{-1}\bigl(w^{\mathcal{S}}(1-\tau) (q_1 \circ \varphi_{\text{opt}})\sqrt{\smash{\varphi'_{\text{opt}}}\vphantom)}  + w^{\mathcal{S}}(\tau) q_2\bigr)\bigr]
\]
with
\[
  w^{\mathcal{S}}(\tau) = \frac{\sin\bigl(\tau \distS([c_1],[c_2])\bigr)}{\sin\bigl(\distS([c_1],[c_2])\bigr)}.
\]

In general, though, the infimum in~\eqref{eq:distS}
is not attained in $\Diff_+(I)$.
However, it was shown in \cite{bruveris2016}
that a relaxation of the optimisation problem~\eqref{eq:distS1} 
to a larger space of reparametrisations attains its minimum.
Denote to that end
\[
\Phi := \bigl\{\varphi \in AC(I) : \varphi(0) = 0,\ \varphi(1) = 1,\
\varphi'(t) \ge 0 \text{ for a.e. } t \in I\bigr\}.
\]

\begin{lemma}[Bruveris 2016]
  Assume that $c_1$, $c_2 \in \Imm(I;\mathbb{R}^d)$.
  Then 
  \begin{equation}\label{eq:minprob}
  \distS([c_1],[c_2]) = \inf_{\varphi_1,\, \varphi_2 \in \Phi} \dist(c_1 \circ \varphi_1,c_2 \circ \varphi_2).
  \end{equation}
  Moreover, the optimisation problem in~\eqref{eq:minprob} admits
  a solution $(\varphi_1,\varphi_2) \in \Phi^2$.
\end{lemma}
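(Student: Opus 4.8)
The plan is to treat the two assertions separately: first the identity
\[
\distS([c_1],[c_2])=\inf_{\varphi_1,\varphi_2\in\Phi}\dist(c_1\circ\varphi_1,c_2\circ\varphi_2),
\]
which is a relaxation/density statement, and then the attainment of the infimum, which I would obtain by the direct method of the calculus of variations. Throughout I would use the reparametrisation rule for the SRVT, $Q(c\circ\varphi)=(Q(c)\circ\varphi)\sqrt{\varphi'}$, valid for $\varphi\in\Phi$ by the chain rule for absolutely continuous functions and the reparametrisation invariance of $\len$. Writing $q_i=Q(c_i)$ and substituting into~\eqref{eq:dist2}, the distance to be minimised becomes $\arccos E(\varphi_1,\varphi_2)$ with
\[
E(\varphi_1,\varphi_2):=\int_I\langle q_1(\varphi_1(t)),q_2(\varphi_2(t))\rangle\sqrt{\varphi_1'(t)\varphi_2'(t)}\,dt .
\]
Since $\arccos$ is decreasing on $[-1,1]$ and $E$ takes values in $[-1,1]$ by Cauchy--Schwarz (each reparametrised SRVT still has unit $L^2$ norm, by the change-of-variables formula for monotone absolutely continuous maps), minimising the distance over $\Phi^2$ is equivalent to maximising $E$ over $\Phi^2$, and this is the functional I would work with.

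For the identity, the inequality $\inf_{\Phi^2}\le\distS$ is immediate from $\Diff_+(I)\subset\Phi$ together with~\eqref{eq:distS1}. For the reverse inequality I would show that every pair in $\Phi^2$ is a limit of diffeomorphisms with no jump in cost: given $(\varphi_1,\varphi_2)\in\Phi^2$, set $\varphi_i^\varepsilon=(1-\varepsilon)\varphi_i+\varepsilon\,\Id$ and mollify slightly to obtain elements of $\Diff_+(I)$ with $(\varphi_i^\varepsilon)'\ge\varepsilon>0$. Then $\varphi_i^\varepsilon\to\varphi_i$ uniformly and $(\varphi_i^\varepsilon)'\to\varphi_i'$ in $L^1$; using the elementary bound $|\sqrt a-\sqrt b|^2\le|a-b|$ this gives $\sqrt{(\varphi_i^\varepsilon)'}\to\sqrt{\varphi_i'}$ in $L^2$, and together with the uniform convergence $q_i(\varphi_i^\varepsilon)\to q_i(\varphi_i)$ one obtains $q_i(\varphi_i^\varepsilon)\sqrt{(\varphi_i^\varepsilon)'}\to q_i(\varphi_i)\sqrt{\varphi_i'}$ in $L^2$, hence $\dist(c_1\circ\varphi_1^\varepsilon,c_2\circ\varphi_2^\varepsilon)\to\dist(c_1\circ\varphi_1,c_2\circ\varphi_2)$. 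As the left-hand side is bounded below by $\distS$, taking the infimum over $\Phi^2$ yields $\distS\le\inf_{\Phi^2}$.

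For attainment I would apply the direct method to $M:=\sup_{\Phi^2}E$. The crucial structural feature is that $E$ is invariant under \emph{simultaneous} reparametrisation, $E(\varphi_1\circ\psi,\varphi_2\circ\psi)=E(\varphi_1,\varphi_2)$ for monotone $\psi\in\Phi$, by a change of variables; equivalently, $E$ depends only on the image of the monotone path $t\mapsto(\varphi_1(t),\varphi_2(t))$ in the unit square joining $(0,0)$ to $(1,1)$. I would exploit this to normalise a maximising sequence $(\varphi_1^n,\varphi_2^n)$ to constant-speed parametrisation, which leaves $E^n:=E(\varphi_1^n,\varphi_2^n)$ unchanged and makes the representatives uniformly Lipschitz, since the path length lies in $[\sqrt2,2]$. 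Arzel\`a--Ascoli then provides a subsequence converging uniformly to a pair $(\varphi_1,\varphi_2)$ that is again Lipschitz, monotone and has the correct boundary values, hence lies in $\Phi^2$; this settles closedness of the admissible class in the chosen topology.

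It remains to prove the upper semicontinuity $\limsup_n E^n\le E(\varphi_1,\varphi_2)$, and this is where the main obstacle lies. Uniform convergence gives $\langle q_1(\varphi_1^n),q_2(\varphi_2^n)\rangle\to\langle q_1(\varphi_1),q_2(\varphi_2)\rangle=:k$ uniformly, whereas the Lipschitz bound yields only \emph{weak} convergence of the derivatives $(\varphi_i^n)'$; because of the concave factor $\sqrt{\varphi_1'\varphi_2'}$ the functional is genuinely nonlinear in these weak limits. Where $k\ge0$ this is harmless: from $\sqrt{ab}\le\tfrac12(\lambda a+\lambda^{-1}b)$, passing to the weak limit for each fixed $\lambda$ and then optimising over $\lambda$ gives upper semicontinuity of a positively weighted concave functional, so that $E^n\le\int_I(k^n)^+\sqrt{(\varphi_1^n)'(\varphi_2^n)'}$ and $(k^n)^+\to k^+$ uniformly force $\limsup_n E^n\le\int_I k^+\sqrt{\varphi_1'\varphi_2'}$. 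The difficulty is entirely on $\{k<0\}$, where the same concavity gives semicontinuity in the \emph{wrong} direction, and oscillating sequences indeed show that $E$ fails to be upper semicontinuous there. This is precisely where the relaxation to $\Phi$ (admitting $\varphi_i'=0$) is indispensable: along a misaligned portion of the path one should move parallel to the axes, where $\sqrt{\varphi_1'\varphi_2'}=0$ and the cost vanishes irrespective of the sign of $k$. Concretely I would modify the limiting path on the open set $\{k<0\}$, replacing each diagonal excursion by an axis-aligned ``staircase'' detour with the same endpoints; the modified pair lies in $\Phi^2$, agrees with $(\varphi_1,\varphi_2)$ off the detours, and satisfies $E=\int_I k^+\sqrt{\varphi_1'\varphi_2'}\ge\limsup_n E^n=M$, so that, $M$ being the supremum, equality holds and the supremum is attained. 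The step I expect to require the most care is making this rerouting rigorous over the countably many components of $\{k<0\}$ while controlling monotonicity, absolute continuity and the boundary behaviour.
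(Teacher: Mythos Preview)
The paper does not prove this lemma; it is stated with attribution to \cite{bruveris2016} and used as input. There is therefore no ``paper's own proof'' to compare against. That said, your proposal follows essentially the route of Bruveris's original argument: the direct method on $\Phi^2$, combined with the observation that the supremum of $E$ equals the supremum of the relaxed functional $E_c=\int_I\langle q_1(\varphi_1),q_2(\varphi_2)\rangle^+\sqrt{\varphi_1'\varphi_2'}\,dt$. The paper itself records this relaxation immediately after the lemma, citing \cite[Proof of Prop.~15]{bruveris2016}, and your axis-aligned ``staircase'' rerouting on $\{k<0\}$ is precisely the geometric content of Bruveris's Lemma~16. So your plan is correct and standard.

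One technical point: in the upper-semicontinuity step, taking a single scalar $\lambda$ in $\sqrt{ab}\le\tfrac12(\lambda a+\lambda^{-1}b)$, passing to the weak limit, and then ``optimising over $\lambda$'' does not recover $\int k^+\sqrt{\varphi_1'\varphi_2'}$, since the pointwise infimum over $\lambda$ cannot be pulled inside the integral a posteriori. You need $\lambda=\lambda(t)$; piecewise-constant $\lambda$ together with a density argument works, or you can invoke directly the weak-$*$ upper semicontinuity of $\int g(t)F(a(t),b(t))\,dt$ for concave $F$ and nonnegative continuous $g$. The countable-component rerouting you flag at the end is indeed the place requiring the most bookkeeping, but the total added arclength is controlled by the original arclength (at most a factor $\sqrt2$), so the reparametrised modified pair stays Lipschitz and in $\Phi^2$.
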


The main topic of this paper is the efficient numerical solution
of the optimisation problem~\eqref{eq:minprob}.
In order to do so, we use a more explicit formulation of~\eqref{eq:minprob}.
By applying the chain rule in~\eqref{eq:SRVT}, one obtains that
\[
Q(c\circ \varphi)(t) = Q(c)(\varphi(t))\sqrt{\varphi'(t)}.
\]
Thus, the optimisation problem~\eqref{eq:minprob} reads explicitly as
\[
  \inf_{\varphi_1,\,\varphi_2 \in \Phi} \arccos\Bigl(\int_I
  \Bigl\langle q_1\bigl(\varphi_1(t)\bigl)\sqrt{\varphi_1'(t)},
  q_2\bigl(\varphi_2(t)\bigr)\sqrt{\varphi_2'(t)}\Bigr\rangle\,dt\Bigr).
\]
Since $\arccos$ is monotonically increasing,
we can alternatively define
\begin{equation}\label{eq:Jdef}
J(\varphi_1,\varphi_2) := \int_I \bigl\langle q_1\bigl(\varphi_1(t)\bigr),q_2\bigl(\varphi_2(t)\bigr)\bigr\rangle
\sqrt{\varphi_1'(t)\varphi_2'(t)}\,dt
\end{equation}
and compute
\begin{equation}\label{eq:Jmax}
\distS([c_1],[c_2]) = \arccos\Bigl(\sup_{\varphi_1,\,\varphi_2\in\Phi}\!\!\!\! J(\varphi_1,\varphi_2)\Bigr).
\end{equation}
Moreover, it is shown in \cite[Proof of Prop.~15]{bruveris2016} that
one can replace the functional $J$ in~\eqref{eq:Jmax} with its convex relaxation
\[
J_c(\varphi_1,\varphi_2) :=
\int_I \max\bigl\{\bigl\langle q_1\bigl(\varphi_1(t)\bigr),q_2\bigl(\varphi_2(t)\bigr)\bigr\rangle,0\bigr\}
\sqrt{\varphi_1'(t)\varphi_2'(t)}\,dt
\]
without affecting the minimisers.

\section{A General Variational Problem}
The problem \eqref{eq:Jmax} can be seen as a special case of the variational problem
\begin{equation}
	\sup_{\bm\varphi\in\mathcal A} \Biggl(J(\bm\varphi) :=
		\int_I f(\varphi_1(t),\varphi_2(t))
		\sqrt{\varphi_1'(t)\varphi_2'(t)}\,dt\biggr).
	\label{eq:prob}
\end{equation} 
Here, we denote \(\bm\varphi = (\varphi_1,\varphi_2)\) and \(\mathcal A = \Phi\times\Phi\). Moreover, $f\colon I \times I \to \mathbb{R}_{\ge 0}$ is a continuous, non-negative function.
Although with a different motivation, this problem has been studied in \cite{calder2013,calder2017,thawinrak2017,deuschel1995}. In \cite{calder2013}, a Hamilton-Jacobi-Bellman (HJB) formulation of this problem was derived, and HJB-based solvers were constructed in \cite{calder2013,calder2017,thawinrak2017}. In the following, we recall the main HJB-related results from \cite{calder2013}, and provide some useful generalisations of the results.

\subsection{Dynamic Programming and the Value Function}\label{se:dynprog}

For variational problems of the type \eqref{eq:prob}, the solution can be described using dynamic programming. The starting point is the introduction of a value function \(u:[0,1]^2\to\mathbb R\) defined as
\begin{equation}
	\label{eq:value}
	u(t,\bm x) :=
	\sup_{\bm \varphi\in\mathcal A(t,\bm x)}  \int_0^t f (\varphi_1,\varphi_2) \sqrt{\smash{\varphi_1'\varphi_2'}\vphantom f}\,dt
\end{equation}
with 
\[
	\mathcal A(t,\bm x) := \bigl\{\bm\varphi \in AC(I;\,\mathbb R^2) : \bm\varphi(0) = \bm0,\ \bm\varphi(t) = \bm x,\
\varphi'(s) \ge 0 \text{ for a.e. } s \in I\bigr\}.
\]
Due to the reparametrisation invariance of the integral, we have that \(u(t,\bm x)\) is independent of \(t\). We will therefore omit the time variable in the definition of \(u\) and \(\mathcal A\) and simply write \(u(\bm x) = u(1,\bm x)\). 

In the case when either \(x_1=0\) or \(x_2=0\), we have that \(\varphi_1'\varphi_2'=0\) a.e. for all admissible paths. Consequently, the integrand is zero almost everywhere, meaning that we obtain the boundary values \(u(0,x_2) = u(x_1,0) = 0\). Furthermore, the value function satisfies the dynamic programming principle,
\begin{equation}
		\label{eq:dpp}
		u(\bm\varphi(t)) \geq u(\bm\varphi(t-h)) + \int_{t-h}^t f(\varphi_1,\varphi_2)\sqrt{\varphi_1'\varphi_2'}\,dt,
\end{equation}
for all \(\bm\varphi\in \mathcal A\). Moreover, \(\bm\varphi\) is a solution of \eqref{eq:prob} if and only if we have equality for all \(t\) and \(h\). Dividing by \(h\), and taking the limit as \(h\to0\), this means that a solution \(\bm\varphi\) formally satisfies the differential equation
\[
	-\frac{d}{dt}u(\bm\varphi) + f(\bm\varphi)\sqrt{\varphi_1'\varphi_2'} = 0,
\]
which for smooth \(u\) reads
\[
	-Du(\bm\varphi)\cdot\bm\varphi' + f(\bm\varphi)\sqrt{\varphi_1'\varphi_2'} = 0.
\]
This means that it is possible to reconstruct \(\bm\varphi\) from the value function. 

We will now discuss some properties of the value function that will be needed later in the paper. First of all, the dynamic programming principle \eqref{eq:dpp} implies immediately that $u$ is monotone non-decreasing in the sense that \(u(\bm x) \geq u(\bm y)\) whenever \(\bm x\geq \bm y\). Additionally, wherever \(f(\bm x) >0\), the value function is locally strictly increasing: if \(x_i>y_i\) element-wise then \(u(\bm x)>u(\bm y)\). Finally it has been shown that \(u(\bm x)\) is H\"older continuous with exponent \(\frac12\) \cite[Lemma 1]{calder2013} while \(v(\bm x) := u(\bm x)^2\) is Lipschitz continuous \cite[Lemma 9]{calder2017}.

\subsection{The HJB equation}
For variational problems such as \eqref{eq:prob}, the value function can often be described as the unique solution of the associated Hamilton-Jacobi-Bellman equation. For a general problem of the form
\[
	\sup_{\bm\varphi}\int_0^1 \ell(\bm\varphi(t),\bm\varphi'(t))dt
\]
with associated time dependent value function \(u(t, \bm x)\), this reads
\[
	-u_t(t,\bm x) + \sup_{\bm\alpha\in A} \bigl(- Du(\bm x)\cdot\bm\alpha + \ell(\bm x,\bm\alpha)\bigr) = 0.
\]
Here, \(\bm x\) and \(\bm\alpha\) correspond to \(\bm\varphi(t)\) and \(\bm\varphi'(t)\), respectively. 

As discussed above, the value function is in our case time independent. Thus, we would expect a stationary Hamilton-Jacobi-Bellman equation of the form
\[
	\begin{cases}
	\begin{alignedat}{2}
		H(\bm x,Du) 
		&= 0, \quad && \text{in } (0,1]^2,\\
		u(0,x_2) = u(x_1,0) &= 0,
	\end{alignedat}
	\end{cases}
\]
with the Hamiltonian
\[
	H(\bm x,\bm p) = \sup_{\bm \alpha\in \mathbb R_{\geq0}^2} -\bm p\cdot\bm \alpha+ f(\bm x)\sqrt{\alpha_1\alpha_2}.
\]
The restriction \(\bm \alpha\in \mathbb R_{\geq0}^2\) follows from the fact that \(\bm\varphi'(t)\) takes values in \(\mathbb R_{\geq0}^2\).
However, since the functional is positively homogeneous in \(\bm\alpha\), this leads to a degenerate Hamiltonian which only takes values \(H(\bm x,\bm p) \in \{0,+\infty\}\). This property is a consequence of the reparametrisation invariance of the problem, and will be a problem for uniqueness of viscosity solutions of the HJB equation. On the other hand, due to the reparametrisation invariance, we are able to impose restrictions to the admissible space. Therefore, for some well chosen set \(A\) representing the admissible derivatives of the paths \(\bm\varphi\), we define the Hamiltonian as
\[
	H(\bm x,\bm p) = \sup_{\bm \alpha\in A} -\bm p\cdot \bm\alpha + f(\bm x)\sqrt{\alpha_1\alpha_2}.
\]
We require \(A\) to have certain properties. 
\begin{itemize}
	\item First of all, \(A\) should reflect the admissible directions of the path \(\bm\varphi\). In particular, we must allow for all monotone increasing directions, which means that we must have that \(\overlinecone A = \mathbb R_{\geq0}^2\). 

	\item Secondly, we want the admissible set to permit both negative and positive values for the Hamiltonian (to avoid redundancy of viscosity sub- and supersolutions). This requires that the set $A$ is bounded away from the origin.

	\item Lastly, we want the admissible set to be compact to allow for the Hamiltonian to have a maximiser \(\bm\alpha\). This is not needed for the viscosity characterisation of the value function, but will be a necessary assumption in the construction of numerical solvers. 
\end{itemize}
Together, this can be summarised in the following two assumptions:
\begin{assumption}
	\label{asm:a}
	 \(A\) satisfies the following:
\begin{subassumption}
\item
	\label{asm:ahjb}
	\(A\subset \mathbb R_{\geq0}^{2}\) such that \(\overlinecone A = \mathbb R_{\geq0}^{2}\) and \(\inf_A |\bm\alpha| > 0\).
	\item
	\label{asm:acompact}
	\(A\) compact.
\end{subassumption}
\end{assumption}

There are a few examples of admissible sets which satisfy these assumptions. The natural choices are
\[
	A_r = \{\bm\alpha\in\mathbb R_{\geq0}^2\mid |\bm\alpha|_r=\text{const.}\},
\]
with \(r\in\{1,2,+\infty\}\). More general, for \(1\leq r\leq +\infty\), \(A_r\) satisfies both \cref{asm:ahjb,asm:acompact}. Additionally, there are options satisfying only \cref{asm:ahjb} including
\begin{align*}
	A &= \{\bm\alpha\in\mathbb R_{\geq0}^2\mid \alpha_1\alpha_2 = \text{const.}\}, \\
	 A &= \{\bm\alpha\in\mathbb R_{\geq0}^2\mid \alpha_1=\text{const.}\}. 
\end{align*}
The first choice is (implicitly) used in \cite{calder2013}, while the second choice corresponds to the restriction \(\varphi_1'=1\), which is common in the literature of shape analysis.
However, both these cases lead to special situations where the maximum of \(H\) is not necessarily attained by any \(\bm\alpha\). 

We have the following result:
\begin{theorem}
	\label{thm:hjb}
	Assume that \(A\) is such that \cref{asm:ahjb} holds. Then, the value function \(u\) is the unique viscosity solution of the hyperbolic PDE
	\begin{equation}
		\label{eq:hjb}
		\begin{cases}
		\begin{alignedat}{2}
			H(\bm x,Du) 
			&= 0, \quad && \text{in } (0,1]^2,\\
			u(0,x_2) = u(x_1,0) &= 0.
		\end{alignedat}
		\end{cases}
	\end{equation}
\end{theorem}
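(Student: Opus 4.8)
The plan is to split the statement into two parts — that the value function $u$ from \eqref{eq:value} is a viscosity solution of \eqref{eq:hjb}, and that this solution is unique — and to drive both parts with a single structural observation that I would isolate at the outset. The map $\bm\alpha \mapsto -\bm p\cdot\bm\alpha + f(\bm x)\sqrt{\alpha_1\alpha_2}$ is positively homogeneous of degree one, so its sign is constant along each ray and depends only on the \emph{direction} of $\bm\alpha$. Writing $\bm\alpha = |\bm\alpha|\hat{\bm\alpha}$, one sees that $H(\bm x,\bm p)$ is positive exactly when some unit direction $\hat{\bm\alpha}$ represented in $A$ satisfies $-\bm p\cdot\hat{\bm\alpha} + f(\bm x)\sqrt{\hat\alpha_1\hat\alpha_2} > 0$, and that $H(\bm x,\bm p)\le 0$ exactly when the reverse inequality holds for every such direction. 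Because \cref{asm:ahjb} gives $\overlinecone A = \mathbb R_{\ge0}^2$, the directions present in $A$ are dense in all of $\mathbb R_{\ge0}^2$, and because $\inf_A|\bm\alpha| > 0$ the homogeneity scaling cannot collapse a strict sign to zero. Hence the \emph{sign} of $H(\bm x,\bm p)$ — and therefore the validity of the subsolution inequality $H\le 0$ and the supersolution inequality $H\ge 0$ at any test point — is independent of the particular admissible set $A$.

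This is the engine of the whole proof. The value function $u$ defined in \eqref{eq:value} does not involve $A$ at all, and by the paragraph above the notion of ``viscosity solution of \eqref{eq:hjb}'' coincides for every $A$ satisfying \cref{asm:ahjb}. In particular it agrees with the specific choice $A = \{\bm\alpha\in\mathbb R^2_{\ge 0} : \alpha_1\alpha_2 = \text{const}\}$ used in \cite{calder2013} (which, as one checks, satisfies \cref{asm:ahjb}), and for that choice it is already established that $u$ is the unique viscosity solution. Both the existence characterisation and the comparison principle therefore transfer verbatim to the general $A$, so that I would not need to redo the doubling-of-variables argument.

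For completeness, and to exhibit the ``useful generalisation'' directly, I would also sketch that $u$ solves \eqref{eq:hjb} straight from the dynamic programming principle \eqref{eq:dpp}. The boundary values $u(0,x_2)=u(x_1,0)=0$ are immediate from the definition. At an interior local maximum $\bm x_0$ of $u-\phi$, I would insert the short segment $s\mapsto \bm x_0+s\bm\alpha$ with $\bm\alpha\in A$ into \eqref{eq:dpp}, use $\phi\ge u$ to eliminate $u$, Taylor expand and let the step length tend to zero, obtaining
\[
-D\phi(\bm x_0)\cdot\bm\alpha + f(\bm x_0)\sqrt{\alpha_1\alpha_2}\le 0 \quad\text{for every }\bm\alpha\in A,
\]
that is, $H(\bm x_0,D\phi(\bm x_0))\le 0$. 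For the supersolution inequality at a local minimum I would invoke the equality case of \eqref{eq:dpp} along an (almost) optimal path reaching $\bm x_0$ and reparametrise it — legitimately, by reparametrisation invariance — so that its velocity lies in $A$; here $\overlinecone A=\mathbb R_{\ge0}^2$ is precisely what makes such a reparametrisation available, and passing to the limit produces a direction realising $H\ge 0$.

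I expect the obstacle to be twofold. On the technical side, the forward-stepping subsolution argument fails on the outflow boundary $\{x_1=1\}\cup\{x_2=1\}$, where one cannot move forward without leaving $(0,1]^2$; this characteristic boundary must be treated in the appropriate one-sided viscosity sense (or by a continuity/approximation argument) and is the usual source of difficulty for hyperbolic HJB problems of this type. On the conceptual side, the real crux is the sign-independence claim of the first paragraph: one has to verify carefully that $\overlinecone A=\mathbb R_{\ge0}^2$ together with $\inf_A|\bm\alpha|>0$ preserves not merely the zero set of $H$ but the open conditions $H>0$ and $H<0$, since it is exactly these that encode the sub- and supersolution inequalities and thereby license reducing the general case to the single $A$ treated in \cite{calder2013}.
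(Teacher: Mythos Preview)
Your approach is essentially the paper's own: the paper does not give a self-contained proof but simply cites \cite[Theorems~2 and~3]{calder2013} for existence and uniqueness with the particular choice $A=\{\bm\alpha:\alpha_1\alpha_2=\text{const.}\}$ and then observes that positive homogeneity of the integrand in $\bm\alpha$ makes the viscosity-solution property independent of the admissible set $A$ satisfying \cref{asm:ahjb}. Your proposal spells out this sign-equivalence argument in more detail and adds a direct DPP sketch that the paper omits, but the reduction strategy is identical.
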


That  \(u\) is a viscosity solution of the PDE is proved in \cite[Theorem 2]{calder2013} while uniqueness follows from \cite[Theorem 3]{calder2013}. The actual results in \cite{calder2013} are formulated using a Hamiltonian obtained from the choice \(A = \{\bm\alpha\in\mathbb R_{\geq0}^2\mid \alpha_1\alpha_2 = \text{const.}\}\) after some equivalent reformulations.\footnote{This choice of admissible set gives \(H(\bm x,\bm p) = -\sqrt{\max\{p_1,0\}\max\{p_2,0\}} + \frac12f(\bm x)\) up to some constant, while \cite{calder2013} uses \(H(\bm x,\bm p) = -\max\{p_1,0\}\max\{p_2,0\} + \frac14f(\bm x)^2\). These Hamiltonians will always have the same sign, meaning that they are equivalent in the viscosity sense. } However, as used in the proof, the positive homogeneity of the functional of \(H\) with respect to \(\bm\alpha\) implies equivalence of viscosity solutions for all choices of \(A\) satisfying \cref{asm:a}.


\section{Monotone Schemes for the HJB Equation}
\label{sec:schemes}
We will now construct a new family of schemes for solving the HJB equation. 
The schemes have can be interpreted both as finite difference approximations to the HJB equation similar to the schemes of \cite{calder2017, calder2013, thawinrak2017}, but also as approximations to the dynamic programming principle \eqref{eq:dpp}.

\subsection{Schemes based on \texorpdfstring{\boldmath\(Du\)}{Du}}
\label{sec:schemeu}
We start by constructing numerical schemes for approximating solutions to the HJB equation. 
As in \cite{calder2017,calder2013,thawinrak2017}, we discretise the unit square into a square grid \([0,1]_h^2 := \{0,h,2h,\ldots,1\}^2\). Here, we assume that \(h=1/N\), \(N\) being the number of discretisation points. For each grid node \(\bm x\), we solve a finite difference approximation to the HJB equation, which takes the form
\begin{equation}
	\label{eq:scheme0}
	\max_{\bm\alpha\in A} 
		-D^- u(\bm x)\bm\alpha + 
		f(\bm x)\sqrt{\alpha_1\alpha_2} = 0.
\end{equation}
Here we use the backward difference approximation 
\begin{equation}
	\label{eq:approx_derivative}
	-D^-u(\bm x)\bm\alpha := \frac{u(\bm x-h\bm\alpha) - u(\bm x)}{h}.
\end{equation}
For smooth \(u\), this is a first order approximation to \(-Du(\bm x)\bm\alpha\). However, the term \(u(\bm x-h\bm\alpha)\) needs to be approximated as \(\bm x-h\bm\alpha\) will not coincide with a grid point for all values of \(\bm\alpha\). We will denote this approximation as \(g_h(\bm x,\bm\alpha,u)\). Inserting the approximation \eqref{eq:approx_derivative} into \eqref{eq:scheme0}, this then gives the general scheme 
\begin{equation}
	\label{eq:scheme}
	\max_{\bm\alpha\in A} 
		\frac{g_h(\bm x,\bm\alpha,u) - u(\bm x)}{h} + 
		f(\bm x)\sqrt{\alpha_1\alpha_2} = 0.
\end{equation}
After rearranging the terms, this results in the expression
\begin{equation}
	\label{eq:schemesol}
	u(\bm x) = \max_{\bm\alpha\in A} g_h(\bm x,\bm\alpha,u) + hf(\bm x)\sqrt{\alpha_1\alpha_2}.
\end{equation}
The above idea is in contrast to the typical approach as in \cite{calder2017,calder2013,thawinrak2017}, where \(D u(\bm x)\), interpreted as a gradient, is approximated numerically. The directional derivative is then computed as the inner product of \(\bm\alpha\) with the approximation to \(Du(\bm x)\). In fact, the approximation \eqref{eq:approx_derivative} can be seen as a generalisation of the typical approach, as we can always approximate \(u(\bm x-h\bm\alpha)\) using
\[
	g_h(\bm x,\bm\alpha,u) = u(\bm x) - hD^-u(\bm x)\cdot \bm\alpha .
\]
for some approximate gradient \(D^-u(\bm x)\).

In order to prove convergence of the schemes, we will use the classical proof of Barles \& Souganidis  for so-called monotone schemes (see \cite{barles1991}) To start, we denote the schemes as \(S_h(\bm x,u_h(\bm x),u_h) = 0\) with 
\begin{align*}
	S_h(\bm x,t,u) =
	 \max_{\bm\alpha\in A} 
		\frac{g_h(\bm x,\bm\alpha,u)- t}{h} + 
		f(\bm x)\sqrt{\alpha_1\alpha_2}.
\end{align*}
\cite[Theorem 2.1]{barles1991} states that if a scheme is \emph{monotone}, \emph{stable} and \emph{consistent} it is also convergent. Here, we define
\begin{itemize}
	\item \emph{Monotonicity}: \(S_h\) is non-decreasing in \(u\).
	\item \emph{Stability}: The scheme \(S_h\big(\bm x,u_h(\bm x),u_h\big) = 0\) has a solution \(u_h\) for which \(\|u_h\|_\infty \leq \text{const.}\) independent of \(h\). 
	\item \emph{Consistency}: For every \(\psi \in C^\infty\), we have that
	\[
		\lim_{h\to0, \bm y\to\bm x,\xi\to0} 
			S_h\big(\bm y,\psi(\bm y)+\xi,\psi+\xi\big) = H(\bm x, D\psi(\bm x)).
	\] 	 	
\end{itemize}

Given \(A\), the scheme \eqref{eq:schemesol} is completely determined by \(g_h\). Accordingly, monotonicity, stability and consistency of the scheme can be inferred from the properties of \(g_h\). 

\begin{assumption}
	\label{asm:g}
	The approximation \(g_h : [0,1]_h^2\times A \times C_{\frac12}[0,1]_h^2 \to \mathbb R\) satisfies:
\begin{subassumption}
		\item \(g_h\) is monotone non-decreasing in \(u\).
		\label{asm:g:monotone}
		\item \(g_h\) is localised: there exists \(C>0\) such that, for all functions \(\psi,\xi \colon [0,1]^2\to \mathbb R\) that satisfy \(\psi=\xi\) on the ball \(B_{Ch}(\bm x)\) of radius $Ch$ centered at $\bm x$, we have that
		\[
			g_h(\bm x, \bm\alpha, \psi) = g_h(\bm x, \bm\alpha, \xi).
		\]
		\label{asm:g:localised}
		\item For constant \(\psi = \psi_0\), we have that
		\(
			g_h(\bm x,\bm\alpha,\psi) = \psi_0.
		\)
		\label{asm:g:constant}
		\item \(g_h\) is a superlinear approximation: for every \(\psi \in C^\infty([0,1]^2)\) and all \(L>0\), there exists a modulus of continuity \(\omega_{\psi,L}\) such that
		\[
		\biggl| 
			\frac{g_h\big(\bm x,\bm\alpha,\psi+\xi\big) - \psi(\bm x-h\bm\alpha)-\xi}{h} 
		\biggr|
		 \leq \omega_{\psi,L}(h)
	\]
	for every \(|\xi|\leq L\). 
		\label{asm:g:superlinear}
	\end{subassumption}
\end{assumption}

\begin{theorem}
	\label{thm:scheme}
	Under \cref{asm:a,asm:g}, the scheme \eqref{eq:schemesol} is convergent. 
\end{theorem}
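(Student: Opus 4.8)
The plan is to verify the three hypotheses of the Barles--Souganidis convergence theorem \cite{barles1991} --- monotonicity, stability, and consistency --- for the scheme written in operator form as \(S_h(\bm x, u_h(\bm x), u_h) = 0\), and then to identify the resulting limit with the value function via \cref{thm:hjb}. Monotonicity is essentially immediate: since \(S_h\) depends on \(u\) only through the collection \(\{g_h(\bm x,\bm\alpha,u)\}_{\bm\alpha\in A}\), and each \(g_h(\bm x,\bm\alpha,\cdot)\) is non-decreasing in \(u\) by \cref{asm:g:monotone}, taking the supremum over \(\bm\alpha\in A\) preserves monotonicity, so \(S_h\) is non-decreasing in its function argument. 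The remaining term \(f(\bm x)\sqrt{\alpha_1\alpha_2}\) does not involve \(u\), so nothing more is needed here.

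For consistency I would fix \(\psi\in C^\infty\) and compute, for \(|\xi|\le L\),
\[
S_h\bigl(\bm y,\psi(\bm y)+\xi,\psi+\xi\bigr) = \max_{\bm\alpha\in A}\left[\frac{g_h(\bm y,\bm\alpha,\psi+\xi)-\psi(\bm y)-\xi}{h} + f(\bm y)\sqrt{\alpha_1\alpha_2}\right].
\]
I would split the bracketed difference quotient as \(\frac{\psi(\bm y-h\bm\alpha)-\psi(\bm y)}{h}\) plus the remainder \(\frac{g_h(\bm y,\bm\alpha,\psi+\xi)-\psi(\bm y-h\bm\alpha)-\xi}{h}\). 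The superlinearity bound \cref{asm:g:superlinear} controls the remainder by \(\omega_{\psi,L}(h)\to 0\) \emph{uniformly in} \(\bm\alpha\in A\), while a first-order Taylor expansion of \(\psi\) gives \(\frac{\psi(\bm y-h\bm\alpha)-\psi(\bm y)}{h}\to -D\psi(\bm x)\cdot\bm\alpha\), again uniformly over \(\bm\alpha\) because \(A\) is bounded (\cref{asm:ahjb}). Together with the continuity of \(f\), the integrand converges uniformly on the compact set \(A\) (\cref{asm:acompact}) to \(-D\psi(\bm x)\cdot\bm\alpha + f(\bm x)\sqrt{\alpha_1\alpha_2}\); uniform convergence then lets me interchange limit and maximum, yielding exactly \(H(\bm x,D\psi(\bm x))\). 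The compactness of \(A\) is what makes this interchange legitimate.

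Stability is the part I expect to be the main obstacle, since it requires both \emph{existence} of a grid solution \(u_h\) and an \(h\)-independent bound. On the finite grid \([0,1]_h^2\) with the boundary values pinned to zero, I would read \eqref{eq:schemesol} as a fixed-point equation \(u = Tu\) for the monotone, continuous operator \(Tu(\bm x) := \max_{\bm\alpha\in A} g_h(\bm x,\bm\alpha,u) + hf(\bm x)\sqrt{\alpha_1\alpha_2}\). Using \cref{asm:g:constant}, the zero function is a subsolution, since \(T0(\bm x) = hf(\bm x)\max_{\bm\alpha\in A}\sqrt{\alpha_1\alpha_2}\ge 0\). For a supersolution I would try the affine function \(\bar u(\bm x) = \tfrac12\|f\|_\infty(x_1+x_2)\): because \(g_h\) reproduces smooth functions up to the superlinear error and \(\sqrt{\alpha_1\alpha_2}\le\tfrac12(\alpha_1+\alpha_2)\), one checks that \(T\bar u \le \bar u\), with \(\bar u\) bounded by \(\|f\|_\infty\) independently of \(h\) (the arguments falling outside \([0,1]^2\) near the boundary are controlled by the zero boundary data and monotonicity). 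Monotonicity of \(T\) then makes the iteration \(u^{n+1}=Tu^n\) starting from \(0\) non-decreasing and bounded above by \(\bar u\); by monotone convergence (or Brouwer's theorem on the order interval \([0,\bar u]\)) it converges to a fixed point \(u_h\) with \(\|u_h\|_\infty\le\|f\|_\infty\).

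Having established the three properties, I would invoke \cite[Theorem 2.1]{barles1991} to conclude that the solutions \(u_h\) converge locally uniformly as \(h\to 0\) to the unique viscosity solution of \(H(\bm x,Du)=0\) with the stated boundary conditions; by \cref{thm:hjb} this limit is precisely the value function \(u\), which proves convergence of the scheme. I anticipate the construction and verification of the uniform supersolution, together with the boundary-layer bookkeeping it entails, to be the most delicate step, whereas monotonicity and consistency follow rather directly from \cref{asm:g} and the compactness of \(A\).
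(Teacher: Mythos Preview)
Your treatment of monotonicity and consistency matches the paper's argument essentially line for line: both reduce monotonicity to that of $g_h$, and both prove consistency by splitting the difference quotient into the exact backward difference of $\psi$ plus the $\omega_{\psi,L}(h)$ remainder from \cref{asm:g:superlinear}, then invoke compactness of $A$ (\cref{asm:acompact}) to swap the limit and the maximum.

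For stability the two arguments diverge. The paper does not build a supersolution or run a fixed-point iteration; it exploits that the scheme \eqref{eq:schemesol} is \emph{explicit and marching}. Using \cref{asm:g:monotone,asm:g:constant,asm:g:localised} it sandwiches $g_h(\bm x,\bm\alpha,u_h)$ between $\min$ and $\max$ of $u_h$ over the backward neighbourhood $B_{Ch}^-(\bm x)=\{\bm y\le\bm x:\bm y\in B_{Ch}(\bm x)\}$, and then observes that the grid, viewed as a directed acyclic graph, has paths of length at most $2N$ from $\bm 0$ to any node. This yields directly $0\le u_h\le 2\|f\|_\infty\max_{\bm\alpha\in A}\sqrt{\alpha_1\alpha_2}$, with existence immediate from the marching structure. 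Your route via a barrier $\bar u(\bm x)=\tfrac12\|f\|_\infty(x_1+x_2)$ and monotone iteration is the textbook viscosity-solution approach and would work, but note a small wrinkle: \cref{asm:g:superlinear} only gives $g_h(\bm x,\bm\alpha,\bar u)\le \bar u(\bm x-h\bm\alpha)+h\,\omega_{\bar u}(h)$, so you obtain $T\bar u\le \bar u + h\,\omega(h)$ rather than $T\bar u\le\bar u$ exactly. This is easily repaired (take a slope strictly larger than $\tfrac12\|f\|_\infty$ and use $\inf_A|\bm\alpha|>0$ from \cref{asm:ahjb} to absorb the $o(1)$ term for small $h$), but as written the supersolution check is not quite complete. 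The paper's DAG argument sidesteps this entirely and never needs \cref{asm:g:superlinear} for stability, only for consistency; conversely, your approach never uses the localisation \cref{asm:g:localised}, which is the workhorse of the paper's stability proof.
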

\begin{proof}
	The scheme satisfies the following properties:
	\begin{itemize}
		\item\emph{Monotonicity}: \(S_h\) is increasing in \(g_h\) and \(g_h\) is non-decreasing in \(u\). Hence \(S_h\) is non-decreasing in \(u\). 
		
		\item\emph{Stability}: 
		The solution \(u_h\) of \(S_h=0\) is explicitly given in \eqref{eq:schemesol}. Denote in the following
		\[
		B_{Ch}^-(\bm x) := \bigl\{\bm y \in I \cap B_{Ch}(\bm x) : \bm y \le \bm x\bigr\}.
		\]
		Due to \cref{asm:g:monotone,asm:g:constant,asm:g:localised}, we have that
		\begin{multline*}
			\min_{\bm y\in B_{Ch}^-(\bm x)}\hspace{-.1cm} u_h(\bm y)
			\leq g_h\Bigl(\bm x,\bm\alpha,\min_{\bm y\in B_{Ch}^-(\bm x)} \hspace{-.1cm} u_h(\bm y)\Bigr)
			\\
			\leq g_h(\bm x,\bm\alpha,u_h)
			\leq g_h\Bigl(\bm x,\bm\alpha,\max_{\bm y\in B_{Ch}^-(\bm x)} \hspace{-.1cm} u_h(\bm y)\Bigr)
			= \max_{\bm y\in B_{Ch}^-(\bm x)}\hspace{-.1cm} u_h(\bm y)
		\end{multline*}
		for every \(h>0\).
		Inserting these estimates into \eqref{eq:schemesol}, we obtain that
		\[
			\min_{\bm y\in B_{Ch}^-(x)}\hspace{-.1cm} u_h(\bm y)
				\leq u_h(\bm x) 
				\leq \max_{\bm y\in B_{Ch}^-(x)}u_h(\bm y) + h\|f\|_\infty A_{max}
		\]
		with \(A_{max} = \max_{\bm\alpha\in A} \sqrt{\alpha_1\alpha_2}\). If we now consider \([0,1]_h^2\) as an directed acyclic graph, each path from \(\to \bm0\) to \(\bm x\in[0,1]_h^2\) has length of at most \(2N\). This gives that \(0\leq u_h(\bm x) \leq 2\|f\|_\infty A_{max}\).
	
		\item\emph{Consistency}: Since \(A\) is compact, we have that
		\begin{multline*}
			\smash{\lim_{\subalign{h&\to0\\\bm y&\to\bm x\\\xi&\to0}}}
			S_h\big(\bm y,\psi(\bm y)+\xi,\psi+\xi\big)\\
				\begin{aligned}
				&= 
					\lim_{\subalign{h&\to0\\\bm y&\to\bm x\\\xi&\to0}}
					\max_{\bm\alpha\in A} 
					\frac{g_h(\bm y,\bm\alpha,\psi+\xi) - \psi(\bm y)-\xi}{h} + 
					f(\bm y)\sqrt{\alpha_1\alpha_2} \\
				&= 
					\lim_{\subalign{h&\to0\\\bm y&\to\bm x\\\xi&\to0}}
					\max_{\bm\alpha\in A} 
					\frac{\psi(\bm y-h\bm\alpha) - \psi(\bm y)}{h} + 
					\omega_{\psi,L}(h) +
					f(\bm y)\sqrt{\alpha_1\alpha_2} \\
				&\stackrel{\mathmakebox[\widthof{=}]{\mathrm{(*)}}}{=}
					\max_{\bm\alpha\in A} 
					\lim_{\subalign{h&\to0\\\bm y&\to\bm x\\\xi&\to0}}
					\frac{\psi(\bm y-h\bm\alpha) - \psi(\bm y)}{h} + 
					\omega_{\psi,L}(h) +
					f(\bm y)\sqrt{\alpha_1\alpha_2} \\
				&= \max_{\bm\alpha\in A} D\psi(\bm x)(\bm\alpha) + f(\bm x)\sqrt{\alpha_1\alpha_2} \\
				&= H(\bm x,D\psi(\bm x)).
				\end{aligned}
		\end{multline*}
	In \((*)\), we used that the functional is uniformly continuous in \(\bm y, \bm\alpha,h,\xi\) to exchange the limit and maximisation. 	
	\end{itemize}
	Due to \cite[Theorem 2.1]{barles1991}, this proves convergence. 
\end{proof}

\subsection{Schemes based on \texorpdfstring{\boldmath\(D(u^2)\)}{D(u2)}}
\label{sec:implicit}

Recall that \(u\) is only H\"older continuous with exponent \(\frac12\) while \(u^2\) is Lipschitz continuous. This means that one might expect more accurate schemes based on an approximation of \(u^2\) rather than \(u\). This is done in \cite{calder2017}, where schemes are constructed for \(v := u^2\).\footnote{Additionally, a scheme for the Lipschitz continuous term \(w := u/\sqrt{x_1x_2}\) is also constructed in \cite{calder2017}. However, we deem this idea ill-suited for our approach due to the lack of simple closed-form expressions.}

The idea is to utilise that \(D(u^2) = 2uDu\), meaning that \(Du = D(u^2)/2u\) wherever \(u\geq 0\). In such, one would expect \(v\) to be a viscosity solution of 
\[
	\max_{\bm\alpha\in A} \frac{D(u(\bm x)^2)(\bm\alpha)}{2u(\bm x)} + f(\bm x)\sqrt{\alpha_1\alpha_2} = 0. 
\]
Already, this equation has problems with the singularity at \(u(\bm x) = 0\). However, we proceed by assuming for now that \(u(\bm x)>0\). We can then follow the above idea and construct schemes for \(v\) based on the approximation
\begin{equation}
		\label{eq:scheme2}
	\max_{\bm\alpha\in A} 
		\frac{g_h(\bm x,\bm\alpha,u)^2 - u(\bm x)^2}{2u(\bm x) h} + 
		f(\bm x)\sqrt{\alpha_1\alpha_2} = 0.
\end{equation}
For fixed \(\bm\alpha\), this can be modified into a quadratic equation in \(u(\bm x)\), meaning that schemes of this form will have multiple solutions. However, similar to the schemes in \cite{calder2017}, we are only interested in the largest of the solutions. 

Immediately, this means that \(u(\bm x) \geq hf(\bm x)\sqrt{\alpha_1\alpha_2}\). Moreover, we can multiply \eqref{eq:scheme2} with \(2u(\bm x)h\) to find that
\begin{multline*}
	\max_{\bm\alpha\in A} \Bigl[
		g_h(\bm x,\bm\alpha,u)^2 - u(\bm x)^2 + 
		2u(\bm x) hf(\bm x)\sqrt{\alpha_1\alpha_2}\Bigr] \\
	= \max_{\bm\alpha\in A} \Bigl[
		- \Bigl(\underbrace{u(\bm x)^2 -hf(\bm x)\sqrt{\alpha_1\alpha_2}}_{=:F(\bm\alpha)} \Bigr)^2
		+ \underbrace{h^2f(\bm x)^2\alpha_1\alpha_2
		+ g_h(\bm x,\bm\alpha,u)^2}_{=:G^2(\bm\alpha)}\Bigr] \\
	= \max_{\bm\alpha\in A} \bigl[-F(\bm\alpha)^2+G(\bm\alpha)^2\bigr] = 0
\end{multline*}
with \(F,G\geq0\). Observe that we have that
\[
	-F^2(\bm\alpha) + G^2(\bm\alpha) = (F(\bm\alpha)+G(\bm\alpha))(-F(\bm\alpha)+G(\bm\alpha))\leq 0
\]
for all \(\bm\alpha\in A\) with equality if and only if \(\bm\alpha\) is optimal. Moreover, equality can only be achieved if \(-F(\bm\alpha)+G(\bm\alpha) = 0\). Accordingly, the above scheme is identical to
\begin{multline*}
	\max_{\bm\alpha\in A} -F(\bm\alpha) + G(\bm\alpha) \\ =
	\max_{\bm\alpha\in A} 
		- u(\bm x) + hf(\bm x)\sqrt{\alpha_1\alpha_2}
		+ \sqrt{h^2f(\bm x)^2\alpha_1\alpha_2 + g_h(\bm x,\bm\alpha,u)^2} = 0,
\end{multline*}
which gives the closed form expression
\begin{equation}
		\label{eq:schemesol2}
		u(\bm x) = 
			\max_{\bm\alpha\in A} 
			hf(\bm x)\sqrt{\alpha_1\alpha_2}
			+ \sqrt{h^2f(\bm x)^2\alpha_1\alpha_2 + g_h(\bm x,\bm\alpha,u)^2}.
\end{equation}

\begin{theorem}
	\label{thm:scheme2}
	Under \cref{asm:a,asm:g}, the scheme \eqref{eq:schemesol2} is convergent. 
\end{theorem}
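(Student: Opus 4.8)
The plan is to prove \Cref{thm:scheme2} along exactly the Barles--Souganidis route used for \Cref{thm:scheme}, by writing the scheme \eqref{eq:schemesol2} as $S_h(\bm x, u_h(\bm x), u_h) = 0$ with
\[
S_h(\bm x, t, u) = \frac1h\Bigl(\max_{\bm\alpha\in A}\bigl[hf(\bm x)\sqrt{\alpha_1\alpha_2} + \sqrt{h^2 f(\bm x)^2\alpha_1\alpha_2 + g_h(\bm x,\bm\alpha,u)^2}\bigr] - t\Bigr)
\]
and verifying that it is monotone, stable and consistent. Throughout I would work on the cone of non-negative grid functions, which is legitimate because \eqref{eq:schemesol2} manifestly returns non-negative values and the value function satisfies $u \ge 0$; on this cone the interior term $g_h(\bm x,\bm\alpha,u)$ is non-negative by \cref{asm:g:constant,asm:g:monotone}.

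Monotonicity and stability should both reduce to \Cref{thm:scheme} together with the elementary two-sided bound
\[
g_h \le \sqrt{h^2 f^2\alpha_1\alpha_2 + g_h^2} \le g_h + h f\sqrt{\alpha_1\alpha_2},
\]
valid whenever $g_h \ge 0$ (the right inequality is $\sqrt{a^2+b^2}\le a+b$ with $a = hf\sqrt{\alpha_1\alpha_2}$, $b=g_h$). For monotonicity I note that $s\mapsto\sqrt{c+s^2}$ is non-decreasing for $s\ge0$, so the bracket inherits the monotonicity of $g_h$ in $u$ from \cref{asm:g:monotone}. For stability the upper bound shows that the scheme-2 update is dominated by the update of \eqref{eq:schemesol} with $f$ replaced by $2f$, while the lower bound shows it dominates that of \eqref{eq:schemesol}; feeding these into the directed-acyclic-graph argument from the proof of \Cref{thm:scheme} gives $0 \le u_h(\bm x) \le 4\|f\|_\infty A_{\max}$ uniformly in $h$, the constant-function squeeze going through because a constant $\psi_0 \ge 0$ is mapped into $[\psi_0,\ \psi_0 + 2hf\sqrt{\alpha_1\alpha_2}]$.

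The consistency analysis is where the real work lies, and I expect it to be the main obstacle. For a smooth test function $\psi$ with $\psi(\bm x) > 0$ I would use $g_h(\bm y,\bm\alpha,\psi+\xi) = \psi(\bm y - h\bm\alpha) + \xi + h\,\omega_{\psi,L}(h)$ from \cref{asm:g:superlinear}; since $g_h$ is then bounded away from $0$, a first-order Taylor expansion of the square root yields $\sqrt{h^2 f^2\alpha_1\alpha_2 + g_h^2} = g_h + O(h^2)$, so $S_h$ differs from the operator of \eqref{eq:schemesol} by $O(h)$ and the consistency computation in the proof of \Cref{thm:scheme} carries over verbatim, giving the limit $H(\bm x, D\psi(\bm x))$. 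The difficulty is concentrated at points where the relevant test value vanishes: there $g_h = O(h)$, the square root no longer linearises (it behaves like $h\sqrt{f^2\alpha_1\alpha_2 + (D\psi\cdot\bm\alpha)^2}$ rather than like $g_h$), and the scheme over-estimates $H$, so the clean two-sided consistency limit fails.

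I would resolve this by retreating to the one-sided consistency estimates that Barles--Souganidis actually requires. The bound $\sqrt{h^2 f^2\alpha_1\alpha_2 + g_h^2}\ge g_h$ gives $\liminf S_h \ge H(\bm x, D\psi(\bm x))$ at \emph{every} point, which supplies one of the two required viscosity inequalities directly. For the remaining inequality I would exploit the structural fact that an interior zero of $u$ forces $f\equiv 0$ on the whole sub-rectangle $[0,x_1]\times[0,x_2]$: on that region the two terms defining the update both collapse to $g_h$, so \eqref{eq:schemesol2} and \eqref{eq:schemesol} are literally identical and return $u_h\equiv 0$ there. Hence the over-estimate is confined to the region where $u>0$, where the linearisation above is valid, and a careful bookkeeping of the sub-/supersolution argument near the boundary of $\{u=0\}$ closes the gap. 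Combined with the comparison principle underlying the uniqueness in \Cref{thm:hjb}, this identifies both relaxed limits $\limsup^* u_h$ and $\liminf_* u_h$ with the unique viscosity solution $u$ of \eqref{eq:hjb}, yielding convergence.
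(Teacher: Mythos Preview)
Your proposal is correct and follows essentially the same strategy as the paper: verify the Barles--Souganidis hypotheses wherever the limiting value is positive, and handle the zero set separately via the comparison $u_h \ge \tilde u_h$ with the scheme~\eqref{eq:schemesol} together with the structural fact that $u(\bm x)=0$ forces $f\equiv 0$ on $[0,x_1]\times[0,x_2]$ (so both schemes return $0$ there). The only cosmetic difference is that the paper works with the singular form~\eqref{eq:scheme2} of $S_h$, which makes consistency with $H$ immediate for positive test values, whereas you work with the closed form~\eqref{eq:schemesol2} and recover consistency by linearising the square root; both routes lead to the same endgame.
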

\begin{proof}
	Consider
	\[
		\overline u(\bm x) = \limsup_{\substack{\bm x\to\bm y\\h\to0}} u_h(\bm y), \quad
		\underline u(\bm x) = \liminf_{\substack{\bm x\to\bm y\\h\to0}} u_h(\bm y).
	\]
	The proof of \cite[Theorem 2.1]{barles1991} still holds for all \(\bm x\) for which \(\underline u(\bm x)>0\), and proving the properties of monotonicity, stability and consistency is similar to that of \cref{thm:scheme}. Wherever \(\underline u(\bm x)=0\), the singularity of the scheme breaks the proof of convergence. However, we have that \(u(\bm x)=0\) if and only if \(f(\bm y) = 0\) for all \(\bm y\leq \bm x\). Accordingly, it is sufficient to prove that this property holds for \(\underline u\) and \(\overline u\) as well.
	
	Observe that
	\begin{align*}
		u_h(\bm x) 
			&=
			\max_{\bm\alpha\in A} 
			hf(\bm x)\sqrt{\alpha_1\alpha_2}
			+ \sqrt{h^2f(\bm x)^2\alpha_1\alpha_2 + g_h(\bm x,\bm\alpha,u_h)^2}
			\\
			&\geq 
			\max_{\bm\alpha\in A} 
			hf(\bm x)\sqrt{\alpha_1\alpha_2}
			+ g_h(\bm x,\bm\alpha,u_h).
	\end{align*}
	Inductively, this gives that \(u_h\geq \tilde u_h\) where \(\tilde u_h\) is the solution of \eqref{eq:schemesol} for the same choices of \(A\) and \(g_h\). Since \(\tilde u_h\) is convergent, this implies that \(u_h(\bm x)\geq\tilde u_h(\bm x)>0\) wherever \(f(\bm x) > 0\).

	Now assume that \(f(\bm y) = 0\) for all \(\bm y\leq \bm x\). Then, since \(g_h(\cdot,\cdot,0) = 0\), it is clear that \(u_h(\bm x) = \tilde u_h(\bm x) = 0\), concluding the proof.

\end{proof}

\section{Proposed Schemes}\label{se:proposed}
Now, it remains to choose \(A\) and \(g_h\) such that we obtain efficient schemes. In particular, we desire closed form expressions for both \(u_h\) and the optimal \(\bm\alpha\) used in each step.

For the closed form expressions for the schemes, it is useful to denote \(\bm x_1^1\) as the current grid point for which we are solving the schemes. In addition, we denote \(\bm x_0^1\), \(\bm x_1^0\) and \(\bm x_0^0\) as the other three points of the grid cell. Moreover, denote \(u_i^j = u(\bm x_i^j)\) and \(f_i^j = f(\bm x_i^j)\). Until now, we have assumed that \(f_i^j\) is evaluated exactly. For approximations of this term, see \cref{sec:appf}.

\subsection{Schemes based on \texorpdfstring{\boldmath\(Du\)}{Du}}
For the scheme \eqref{eq:schemesol}, we start by letting \(g_h\) be the linear interpolation of \(u\) through the points \(\bm x_0^1\), \(\bm x_1^0\) and \(\bm x_0^0\), which reads 
\begin{equation}
\label{eq:g1}
\begin{aligned}
	g_h 
		&= u_0^0 + (1-\alpha_1)(u_1^0-u_0^0)  + (1-\alpha_2)(u_0^1-u_0^0) \\
		&= (\alpha_1+\alpha_2-1)u_0^0 + (1-\alpha_1)u_1^0  + (1-\alpha_2)u_0^1.
\end{aligned}
\end{equation}
It is easy to check that this satisfies all properties of \cref{asm:g} apart from monotonicity. To ensure monotonicity, it is required that \(0\leq\alpha_1,\alpha_2\leq1\) and that \(\alpha_1+\alpha_2\geq 1\). This holds for every \(\alpha \in A_r := \{\bm\alpha\in\mathbb R_{\geq0}^2\mid |\bm\alpha|_r=1\}\) for \(1\leq r\leq +\infty\). We find that for the choices \(A = A_1\) and \(A = A_\infty\), we can solve the schemes analytically.

Choosing \(A = A_1\), combined with the first approximation \eqref{eq:g1}, we obtain the following solution for the scheme, and the optimal \(\bm\alpha^*\):
\begin{equation}
\label{eq:u1}
\tag{U\textsubscript{\(1\)}}
\begin{aligned}
	hS_h &= u_1^1 - \frac12\left(
		u_0^1+u_1^0+\sqrt{
			(u_0^1-u_1^0)^2 + (hf_1^1)^2
		}
	\right), \\
	u_1^1 &= \frac12\left(
		u_0^1+u_1^0+\sqrt{
			(u_0^1-u_1^0)^2 + (hf_1^1)^2
		}
	\right), \\
	\bm\alpha^* &=\biggl(
		\frac12\Bigl(1+\frac{u_0^1-u_1^0}{(u_0^1-u_1^0)^2+(hf_1^1)^2}\Bigr), \,
		\frac12\Bigl(1-\frac{u_0^1-u_1^0}{(u_0^1-u_1^0)^2+(hf_1^1)^2}\Bigr)
	\biggr).
\end{aligned}
\end{equation}
Interestingly, this is exactly the original scheme proposed in \cite{calder2013}. Using \(A = A_\infty\), we obtain with the abbreviation
\[
u^*_* := \max\{u_0^1, u_1^0\}
\]
that
\begin{equation}
\label{eq:uinf}
\tag{U\textsubscript{\(\infty\)}}
\begin{aligned}
	hS_h &= u_1^1 - \begin{cases}\displaystyle
		u_*^* + \frac{(hf_1^1)^2}{4(u_*^*-u_0^0)}, & 2(u_*^*-u_0^0) > \sqrt{(hf_1^1)^2}, \\
		u_0^0 + hf_1^1, & \text{otherwise},
	\end{cases} \\
	u_1^1 &= \begin{cases}\displaystyle
		u_*^* + \frac{(hf_1^1)^2}{4(u_*^*-u_0^0)}, & 2(u_*^*-u_0^0) > \sqrt{(hf_1^1)^2}, \\
		u_0^0 + hf_1^1, & \text{otherwise},
	\end{cases} \\
	\bm\alpha^* &= \begin{cases}
		\displaystyle
		\biggl(1,\, \frac{hf_1^1}{2(u_0^1-u_0^0)}\biggr), & u_0^1\geq u_1^0, 2(u_0^1-u_0^0) > \sqrt{(hf_1^1)^2},\\
		\displaystyle
		\biggl(\frac{hf_1^1}{2(u_1^0-u_0^0)},\, 1\biggr), & u_1^0> u_0^1, 2(u_1^0-u_0^0) > \sqrt{(hf_1^1)^2},\\
		(1,\,1), &\text{otherwise}.
	\end{cases}
\end{aligned}
\end{equation}

\subsection{Schemes based on \texorpdfstring{\boldmath\(D(u^2)\)}{D(u2)}}
For the scheme \eqref{eq:schemesol}, it is useful to express the schemes for \(v_i^j := (u_i^j)^2\). Instead of linearly interpolating \(u\), we linearly interpolate \(v\), meaning that 
\begin{equation}
\label{eq:g2}
\begin{aligned}
	g_h^2
		&= v_0^0 + (1-\alpha_1)(v_1^0-v_0^0)  + (1-\alpha_2)(v_0^1-v_0^0) \\
		&= (\alpha_1+\alpha_2-1)v_0^0 + (1-\alpha_1)v_1^0  + (1-\alpha_2)v_0^1.
\end{aligned}
\end{equation}
Here, we have the same conditions for monotonicity as for the schemes based on \(Du\), and we have analytical solutions for \(A = A_1\):
\begin{equation}
\label{eq:v1}
\tag{V\textsubscript{\(1\)}}
\begin{aligned}
	2h&\sqrt{v_1^1}S_h = v_1^1 - \frac12\left(
		v_0^1+v_1^0+\sqrt{
			(v_0^1-v_1^0)^2 + (hf_1^1)^2
		}
	\right), \\
	v_1^1 &= \frac12\left(
		v_0^1+v_1^0+h^2f^2
		+
		\sqrt{
			(v_0^1-v_1^0)^2 + 2(v_0^1-v_1^0)(hf_1^1)^2 + (hf_1^1)^4
		}
	\right), \\
	\bm\alpha^* &=\biggl(
		\frac12\Bigl(1+\frac{v_0^1-u_1^0}{(v_0^1-v_1^0)^2+4v_1^1(hf_1^1)^2}\Bigr), \,
		\frac12\Bigl(1-\frac{v_0^1-u_1^0}{(v_0^1-v_1^0)^2+4v_1^1(hf_1^1)^2}\Bigr)
	\biggr).
\end{aligned}
\end{equation}
Again, we have that the scheme using \(A_1\) is identical to that of \cite{calder2017}. For \(A=A_\infty\), we have with
\[
v_*^* := \max\{v_0^1,v_1^0\}
\]
that
\begin{equation}
\tag{V\textsubscript{\(\infty\)}}
\label{eq:vinf}
\begin{aligned}
	2h\sqrt{v_1^1}S_h &= v_1^1 - \begin{cases}\displaystyle
		v_*^* + \frac{v_1^1(hf_1^1)^2}{v_*^*-v_0^0}, & v_*^*-v_0^0 > \sqrt{v_1^1}hf_1^1, \\
		v_0^0 + hf_1^1, & \text{otherwise},
	\end{cases} \\
	v_1^1 &= \begin{cases}\displaystyle
		\frac{v_*^*(v_*^*-v_0^0)}{v_*^*-v_0^0-(hf_1^1)^2} & (v_*^*-v_0^0)(v_*^*-v_0^0-(hf_1^1)^2) > v_*^*hf_1^1, \\
		u_0^0 + hf_1^1, & \text{otherwise},
	\end{cases} \\
	\bm\alpha^*
	&= \begin{cases}
		\displaystyle
		\biggl(1,\, \frac{\sqrt{v_1^1}hf_1^1}{v_0^1-v_0^0}\biggr), & v_0^1\geq v_1^0,\, v_0^1-v_0^0 > \sqrt{v_1^1}hf_1^1,\\
		\displaystyle
		\biggl(\frac{\sqrt{v_1^1}hf_1^1}{v_1^0-v_0^0},\, 1\biggr), & v_1^0> v_0^1,\, v_1^0-v_0^0 > \sqrt{v_1^1}hf_1^1,\\
		(1,\,1), &\text{otherwise}.
	\end{cases}
\end{aligned}
\end{equation}
Due to the singularity at \(v_1^1 = 0\), we have that \(S_h\) is not defined in these cases. However, the solution of the schemes still exist.

\subsection{Higher Order Filtered Schemes}
It is known that one cannot construct higher order schemes for solving HJB equations, as one requires monotone schemes to obtain convergence. Still, it is common to construct so-called \emph{filtered} schemes. These schemes are based on a high-order (possibly non-monotone) scheme \(S_h^a\), and a monotone scheme \(S_h^m\). The idea is to choose the higher order scheme only if its approximation to the Hamiltonian is sufficiently close to that of the monotone scheme. The selection criterion is typically chosen as \(|S_h^a - S_h^m| \leq k\sqrt{h}\) for some constant \(k\) to preserve the theoretical \(\sqrt{h}\) convergence which is typical for schemes for HJB equations. Therefore, we define the filtered scheme as
\[
	S_h^f := \begin{cases}
		S_h^a, & |S_h^a - S_h^m| \leq k\sqrt{h}, \\
		S_h^m, & |S_h^a - S_h^m| > k\sqrt{h}.
	\end{cases}
\]
This can be implemented by first solving \(S_h^a = 0\). If this solutions satisfies \(|S_h^m| \leq k\sqrt h\), we keep the solution. Otherwise, we use the solution of \(S_h^m = 0\).

To construct the high order scheme, we use the same idea as in \cref{sec:schemes}, except that we use central differences, rather than backward differences. In practice, this means that we approximate
\[
	-Du(\bm x)\bm\alpha = \frac{u(\bm x-\frac h2\bm\alpha) - u(\bm x+\frac h2\bm\alpha)}{h}
\]
with a similar approximation to \(D(u^2)\). Approximating \(u(\bm x-\frac h2\bm\alpha)\) and \(u(\bm x+\frac h2\bm\alpha)\) using \(g_{\frac h2}\), we obtain the two general schemes:
\begin{align}
	\label{eq:scheme2nd}
	S_h^a &= 
	\max_{\alpha\in A} 
		\frac{g_{\frac h2}(\bm x,\bm\alpha,u) - g_{\frac h2}(\bm x,-\bm\alpha,u)}{h} + 
		f(x)\sqrt{\alpha_1\alpha_2} = 0 \\
	\label{eq:scheme22nd}
	S_h^a &= 
	\max_{\alpha\in A} 
		\frac{g_{\frac h2}(\bm x,\bm\alpha,u)^2 - g_{\frac h2}(\bm x,-\bm\alpha,u)^2}{2u(\bm x)h} + 
		f(x)\sqrt{\alpha_1\alpha_2} = 0.
\end{align}
These schemes will be solved with \(\bm x\) being the centre of a grid cell. 

In \eqref{eq:scheme2nd}, we approximate \(\smash{g_{\frac h2}}(\bm x,\bm\alpha,u)\) using a linear interpolation of \(u_0^1\), \(u_1^0\) and \(u_0^0\) for positive \(\bm\alpha\) and a linear interpolation of \(u_0^1\), \(u_1^0\) and \(u_1^1\) for negative \(\bm\alpha\). Interestingly, this gives a scheme which is independent of \(A\), reading
\begin{align*}
	u_1^1 &= u_0^0+\sqrt{
			(u_0^1-u_1^0)^2 + (hf_1^1)^2
		}, \\
	\bm\alpha^* &=\biggl(
		\frac12\Bigl(1+\frac{u_0^1-u_1^0}{(u_0^1-u_1^0)^2+(hf_1^1)^2}\Bigr), \,
		\frac12\Bigl(1-\frac{u_0^1-u_1^0}{(u_0^1-u_1^0)^2+(hf_1^1)^2}\Bigr)
	\biggr).
\end{align*}
With similar linear approximations in \eqref{eq:scheme22nd}, we need an approximation of \(u(\bm x)\), present in the denominator, since \(\bm x\) does not coincide with a grid cell. Using \(2u(\bm x) \approx u_0^1+u_1^0\), we obtain
\begin{align*}
	v_1^1 &= 
		v_0^0+\frac12h^2f^2
		+
		\sqrt{
			(v_0^1-v_1^0)^2 + (2v_0^0+v_0^1+v_1^0)(hf_1^1)^2 + \frac14(hf_1^1)^4
		}, \\
	\bm\alpha^* &=\biggl(
		\frac12\Bigl(1+\frac{v_0^1-u_1^0}{(v_0^1-v_1^0)^2+4v_1^1(hf_1^1)^2}\Bigr), \,
		\frac12\Bigl(1-\frac{v_0^1-u_1^0}{(v_0^1-v_1^0)^2+4v_1^1(hf_1^1)^2}\Bigr)
	\biggr).
\end{align*}

\subsection{Fully Discretised Schemes}
The scheme \eqref{eq:scheme} can in fact be used to formulate fully discretised schemes with some modification. We start by replacing \(A\) with a variable admissible space  \(A = A_h \subset \mathbb N_0^2 \setminus \bm0\), i.e. the set of pairs of non-negative integers. With this choice, \(\bm x-h\bm \alpha\) coincides with other grid points of the square grid (as long as \(\bm x\) lies on a grid point). In such, the scheme
\begin{equation}
	\label{eq:scheme3sol}
	u(\bm x) = \max_{\bm\alpha\in A_h} u(\bm x-h\bm\alpha) + f(\bm x)\sqrt{\alpha_1\alpha_2}
\end{equation}
can be solved without the need for an approximation to \(u(\bm x-h\bm\alpha)\). By setting \(\bm x=\bm x_i^j\in[0,1]_h^2\), this scheme reads
\begin{equation}
	\label{eq:ddp}
	\tag{DDP}
	u_i^j = \max_{(k,l)\in A_h} u_{i-k}^{j-l} + 
		hf_i^j\sqrt{kl}.
\end{equation}
This is exactly the discretised dynamic programming method commonly used in the literature. Under certain assumptions on \(A_h\), we can still use the HJB based approach to prove convergence of this scheme. See \cref{asm:ah} and \cref{thm:scheme3} in Appendix \ref{sec:app} for details. 

Choosing \(A_h\) requires a compromise between accuracy and complexity. We want \(A_h\) to include as many directions as possible for optimal accuracy. However,  the larger the set \(A_h\), the higher the computational cost. 
One example of at set satisfying \cref{asm:ah} is
\[
	A_h = \{\bm\alpha\in\mathbb N_0^2 \mid |\bm\alpha| \leq kh^{-r}\}
\]
for constants \(k>0\) and \(0<r<1\). Here, \(r = \frac12\) will typically give a good compromise between accuracy and efficiency.


\section{Numerical Computation of Geodesics}\label{se:backtracking}

The numerical solution of the value function gives an estimate to \(u(\bm 1)\), which in turn can be used to approximate the geodesic distance through \(\dist^{\mathcal S}([c_1],\, [c_2]) = \arccos (u(\bm 1))\). Additionally, through a backtracking method, we can use the value function to obtain an estimate of the solution \(\bm\varphi\) of the variational problem \eqref{eq:prob}. This can then be used to estimate the shape space geodesic between \(c_1\) and \(c_2\). 

\subsection{Backtracking}
To retrieve the optimal reparametrisation path \(\bm\varphi\), we propose a piecewise constant interpolation of the maximiser \(\bm\alpha^*\) of the approximated HJB equation, where \(\bm\alpha^*\) is constant on each grid cell \((x_i,x_{i+1}]\times(x^j,x^{j+1}]\). With \(\bm\varphi'(t) = \bm\alpha^*\), this gives a first order piecewise constant differential equation for \(\bm\varphi'\), which therefore can be computed explicitly. 

In practice, the path \(\bm\varphi\) will be piecewise linear, only changing direction when intersecting a grid line, meaning that the path can be represented by a sequence \(\{\bm\varphi_k\}\) with length at most \(2N\).  Assume that the backtracking procedure has reached the point \(\bm\varphi_k \in (x_i,x_{i+1}]\times(x^j,x^{j+1}]\). In order to obtain the next point in the sequence, we construct the line \(\bm\psi(t) = \bm\varphi_k - t\bm\alpha^*\), defined for \(t\geq 0\) where \(\bm\alpha^*\) is optimal for the given grid cell. Then, we find the intersection point between \(\bm\psi\) and the vertical line \((x_i,\cdot)\) and the intersection point between \(\bm\psi\) and the horizontal line \((\cdot,x^j)\). The next point in the sequence will then be the maximum of these points. This reads
\[
	\bm\varphi_{k-1} = \max\bigl\{(x_i, \varphi_{2,k}-(\varphi_{1,k}-x_i)\alpha_2^*/\alpha_1^*),\,(\varphi_{1,k}-(\varphi_{2,k}-x^j)\alpha_1^*/\alpha_2^*, x^j)\bigr\}.
\]
Note that since the path \(\bm\varphi\) is monotone increasing, one of the intersection points will actually be maximal with respect to the standard partial ordering of \(\mathcal R^2\). 

The terminal condition for the path \(\bm\varphi\) is \(\bm\varphi(1) = \bm 1\), which also acts as the starting point for the backtracking procedure. With the convention that \(\bm\alpha^* = (1,0)\) wherever \(\bm x = (x_1,0)\) and \(\bm\alpha^*=(0,1)\) wherever \(\bm x = (0,x_2)\), we ensure that the (inferred) initial condition \(\bm\varphi(0) = \bm0\) is met.

\subsection{Computing Geodesics and Geodesic Distances}
Now that we have an estimate of \(\bm\varphi\), we can estimate the SRVTs after reparametrisation. Similar to the reparametrisation path, we construct a sequence of points of the form
\[
	q_{i,k} := q_i(\varphi_{i,k})\sqrt{\frac{\varphi_{i,k}-\varphi_{i,k-1}}{\Delta t_k}},
\]
for \(i = 1,\, 2\). Note that this expression requires \(\Delta t_k\), representing the joint parametrisation of \(\varphi_1\) and \(\varphi_2\). Since the problem is reparametrisation invariant, this can be chosen based on the application. One natural option is to choose \(\Delta t_k = \frac12(\varphi_{1,k}-\varphi_{1,k-1} + \varphi_{2,k}-\varphi_{2,k-1})\), motivated from the assumption that \(\|\bm\varphi'\|_1=1\). This constraint is especially useful since  \(\bm\varphi'\) is bounded and the domain \(I\) remains unchanged. 

Using the point estimates of the SRVTs, we can approximate the objective function and the geodesics. First of all, for the objective function, we have the following estimate:
\begin{align*}
	J_h(\bm\varphi_h)
		&= \sum_{k} \langle q_{1,k},\, q_{2,k} \rangle \Delta t_k \\
		&= \sum_{k} \langle q_1(\varphi_{1,k}),\, q_2(\varphi_{2,k})\rangle \sqrt{(\varphi_{k,1}-\varphi_{k-1,1})(\varphi_{k,2}-\varphi_{k-1,2})}.
\end{align*}
Observe in particular that this expression is independent of \(\Delta t_k\), as desired. 

Similarly, we can pointwise approximate the geodesic using
\begin{multline*}
	\gamma_k(\tau) = 
		w_h^{\mathcal S}(1-\tau)
		q_{1,k}
		+ w_h^{\mathcal S}(\tau)
		q_{2,k} \\
	=
		w_h^{\mathcal S}(1-\tau)
		q_1(\varphi_{1,k})\sqrt{\frac{\varphi_{1,k}-\varphi_{1,k-1}}{\Delta t_k}}
		+ w_h^{\mathcal S}(\tau)
		q_2(\varphi_{2,k})\sqrt{\frac{\varphi_{2,k}-\varphi_{2,k-1}}{\Delta t_k}},
\end{multline*}
where \(w_h^{\mathcal S}(\tau) = \sin(\tau\arccos J_h(\bm\varphi_h))/\sin(\arccos J_h(\varphi_h))\).
In the pre-shape space, the geodesic can be approximated using
\[
	Q^{-1}(\gamma(\tau))(t_k) \approx \sum_{l=1}^k \gamma_l(\tau)|\gamma_l(\tau)|\Delta t_l
	= \sum_{l=1}^k \gamma_l(\tau)\sqrt{\Delta t_l}\bigl|\gamma_l(\tau)\sqrt{\Delta t_l}\bigr|.
\]
Similarly to the objective function, this estimate is independent of \(\Delta t\), as desired.


\section{Numerical Experiments}\label{se:experiments}

To test the numerical schemes, we use three test problems labeled \ref{exp:a}, \ref{exp:b}, \ref{exp:c} for which the curves and shape space geodesics are illustrated in \cref{fig:geodesics}. For each of the test problems, the schemes were ran with grid sizes \(N = h^{-1} = 5\cdot 2^2, \ \ldots, \ 5\cdot 2^{10}\) (for the discretised dynamic programming, the smallest two step sizes were omitted due to computational complexity).

For \cref{exp:a,exp:b}, we use arc length parametrisation as the initial parametrisation of the curves. For these problems, we do not have analytic solutions for any of the variables of interest. The analytic solutions were therefore approximated using the filtered scheme with a fine grid size \(h^{-1} = \epsilon^{-1} = 5\cdot 2^{11}\). For \cref{exp:c}, we compare two curves with equal shape but different initial parametrisations. In particular, we let \(c_1 = c_0\circ\psi_1\) and \(c_2=c_0\circ\psi_2\) with \(c_0\) being the arc length parametrisation of the curve. Here, we use the M\"obius transformations
\begin{align*}
	\psi_1(t) = 3t/(1+2t), \quad
	\psi_2(t) = t/(3-2t),
\end{align*}
which are each other's inverses. Hence, one solution of the reparametrisation problem is given by \(\varphi_1 = \psi_1^{-1} = \psi_2\) and \(\varphi_2 = \psi_2^{-1} = \psi_1\). For this problem, we have the exact geodesic distance \(d(c_1,c_2) = 1\) and exact expressions for the geodesics (which are constant in \(\tau\)). 

It has been demonstrated in \cite{thawinrak2017} that filtered schemes can give an improvement for simple problems. However, for our experiments, we found that there was no significant improvement of the filtered schemes compared to the best performing of the monotone schemes.\footnote{There is in some cases a small improvement, but this is outweighed by the added computational time.} Therefore, we will only compare the four monotone schemes presented together with the fully discretised dynamic programming. For the discretised dynamic programming, we tested \(A_h = \{\bm\alpha\in\mathbb N_0^2 \mid |\bm\alpha| \leq kh^{-r}\}\) for different values of \(k,r\). We found that \(k=\frac 34\) and \(r=\frac12\) gave the best performance when measuring accuracy vs computation time. For all schemes, we found that the approximation of \(f\) as described in \cref{sec:appf} gives better results. These approximations were therefore used in the following experiments. 

\begin{figure}
	\begin{experiment}
		\item \label{exp:a}
		\begin{minipage}{10.75cm}\centering

\begin{tikzpicture}[trim axis left, trim axis right,baseline=-0.1cm]
	\begin{axis}[
		%
		%
		at = {(0,0)},
		width = 7cm,
		height = 5cm,
		%
		%
		clip=false,
		hide axis,
		axis equal,
		enlargelimits=false,
		y dir=reverse,
	]
		\addplot [geodesic] table [x index=1, y index=2, meta index=0] {data/ex1/parametrisation.txt};
		\addplot [geodesic] table [x index=3, y index=4, meta index=0] {data/ex1/parametrisation.txt};
		\foreach \x/\y in {1/8,2/9,3/10,4/11,5/12,6/13,7/14} {
			\addplot [geodesic] table [x index = \x, y index = \y, meta index=0] {data/ex1/geodesics.txt};
		}
	\end{axis}
\end{tikzpicture}\end{minipage}
	\vspace{.1cm}
		\item \label{exp:b}
		\begin{minipage}{10.75cm}\centering

\begin{tikzpicture}[trim axis left, trim axis right,baseline=-0.1cm]
	\begin{axis}[
		%
		%
		at = {(0,0)},
		width = 7cm,
		height = 5cm,
		%
		%
		clip=false,
		hide axis,
		axis equal,
		enlargelimits=false,
		y dir=reverse,
	]
		\addplot [geodesic] table [x index=1, y index=2, meta index=0] {data/ex2/parametrisation.txt};
		\addplot [geodesic] table [x index=3, y index=4, meta index=0] {data/ex2/parametrisation.txt};
		\foreach \x/\y in {1/8,2/9,3/10,4/11,5/12,6/13,7/14} {
			\addplot [geodesic] table [x index = \x, y index = \y, meta index=0] {data/ex2/geodesics.txt};
		}
	\end{axis}
\end{tikzpicture}\end{minipage}
	\vspace{.1cm}
		\item \label{exp:c}
		\begin{minipage}{10.75cm}\centering

\begin{tikzpicture}[trim axis left, trim axis right,baseline=-0.1cm]
	\begin{axis}[
		%
		%
		at = {(0,0)},
		width = 7cm,
		height = 5cm,
		%
		%
		clip=false,
		hide axis,
		axis equal,
		enlargelimits=false,
		y dir=reverse,
	]
		\addplot [geodesic] table [x index=1, y index=2, meta index=0] {data/ex3/parametrisation.txt};
		\addplot [geodesic] table [x index=3, y index=4, meta index=0] {data/ex3/parametrisation.txt};
		\foreach \x/\y in {1/8,2/9,3/10,4/11,5/12,6/13,7/14} {
			\addplot [geodesic] table [x index = \x, y index = \y, meta index=0] {data/ex3/geodesics.txt};
		}
	\end{axis}
\end{tikzpicture}\end{minipage}
	\end{experiment}
	\caption{Left: curves coloured by initial parametrisation. Right: shape space geodesics.}
	\label{fig:geodesics}
\end{figure}
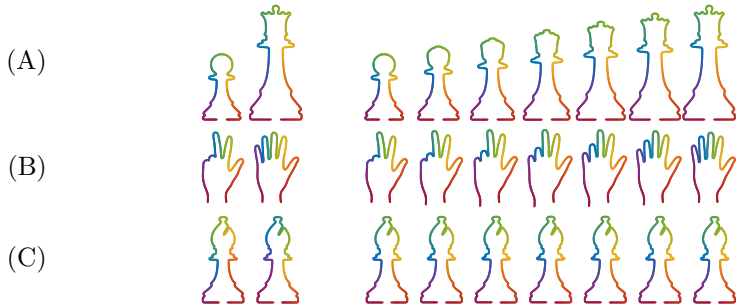

\subsection{Presence of Local Solutions}\label{se:localsols}
Dynamic programming based methods typically converge slower than gradient based method. Therefore, it is important to assess whether local solutions are present or not. In order to do this, we consider the total value function
\[
	u_{tot}(\bm x) :=
	\sup_{\bm \varphi\in \mathcal A}  \int_0^1 f (\varphi_1,\varphi_2) \sqrt{\smash{\varphi_1'\varphi_2'}\vphantom f}dt \quad\text{s.t.}\quad \bm\varphi(\tfrac12) = \bm x.
\]
This variant of the value function measures the similarity between the curves \(c_1,\,c_2\) given a landmark constraint at \(\bm x\), that is, requiring that the point \(c_1(x_1)\) is registered to \(c_2(x_2)\). If \(u_{tot}\) has a local maximum at \(\bm x\), there is a local solution of \eqref{eq:prob} passing through \(\bm x\). This means that \(u_{tot}\) can be used to find local solutions. The total value function will not characterise all local solutions, but the number of local maxima of \(u_{tot}\) is an indication of the number of local solutions of \eqref{eq:prob}. Note that all maxima of \(u_{tot}\) are inherently flat, meaning that there are in practice paths of local maxima. 

The total value function is easy to compute. The ``standard'' value function \eqref{eq:value} was defined by maximising over all paths from \(\bm0\) to \(\bm x\). Alternatively, we can define a reversed value function where we optimise over all paths from \(\bm x\) to \(\bm 1\). Since the problem is fundamentally invariant to reparametrisations, these are identical problems up to replacing \(f(x_1,x_2)\) with \(f(1-x_1,1-x_2)\). Then, the sum of the standard and reversed value functions together gives the total value function. For each local maximum of \(u_{tot}\), one can run the backtracking algorithm in both directions to obtain a local solution of \eqref{eq:prob}. 

\begin{figure}
	\centering
	\begin{subfigure}[b]{.3\textwidth}
		\centering

\begin{tikzpicture}[trim axis left, trim axis right]

	\begin{axis}[
		scale only axis,
		height = .2cm,
		width = 2.5cm,
		at = {(-1.25cm,1.45cm)},
		ymin = 0,
		ymax = 1,
		xmin = 0,
		xmax = .9293,
		ytick = \empty,
		xtick={0,0.25,0.5,0.75,.9293},
		xlabel near ticks, 
		xtick pos=right,
		xticklabel style = {rotate=45},
		anchor=west,
		major tick length=2pt,
		axis on top,
	]
		\addplot [on layer=axis background] graphics [xmin=0,ymin=0,ymax=1,xmax=.9293] {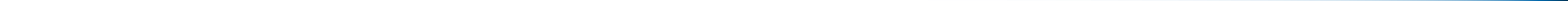};
	\end{axis}
	\begin{axis}[
		scale only axis,
		axis equal,
		height = 2.5cm,
		width = 2.5cm,
		at = {(0cm,0cm)},
		xmin=0,
		xmax=1,
		ymin=0,
		ymax=1,
		xtick=\empty,
		ytick=\empty,
		xlabel={$x_1\vphantom{\varphi_1(t)}$},
		ylabel={$x_2$},
		ylabel style = {rotate=-90},
		axis on top,
	]
		\addplot [on layer=axis background] graphics [xmin=0,ymin=0,xmax=1,ymax=1] {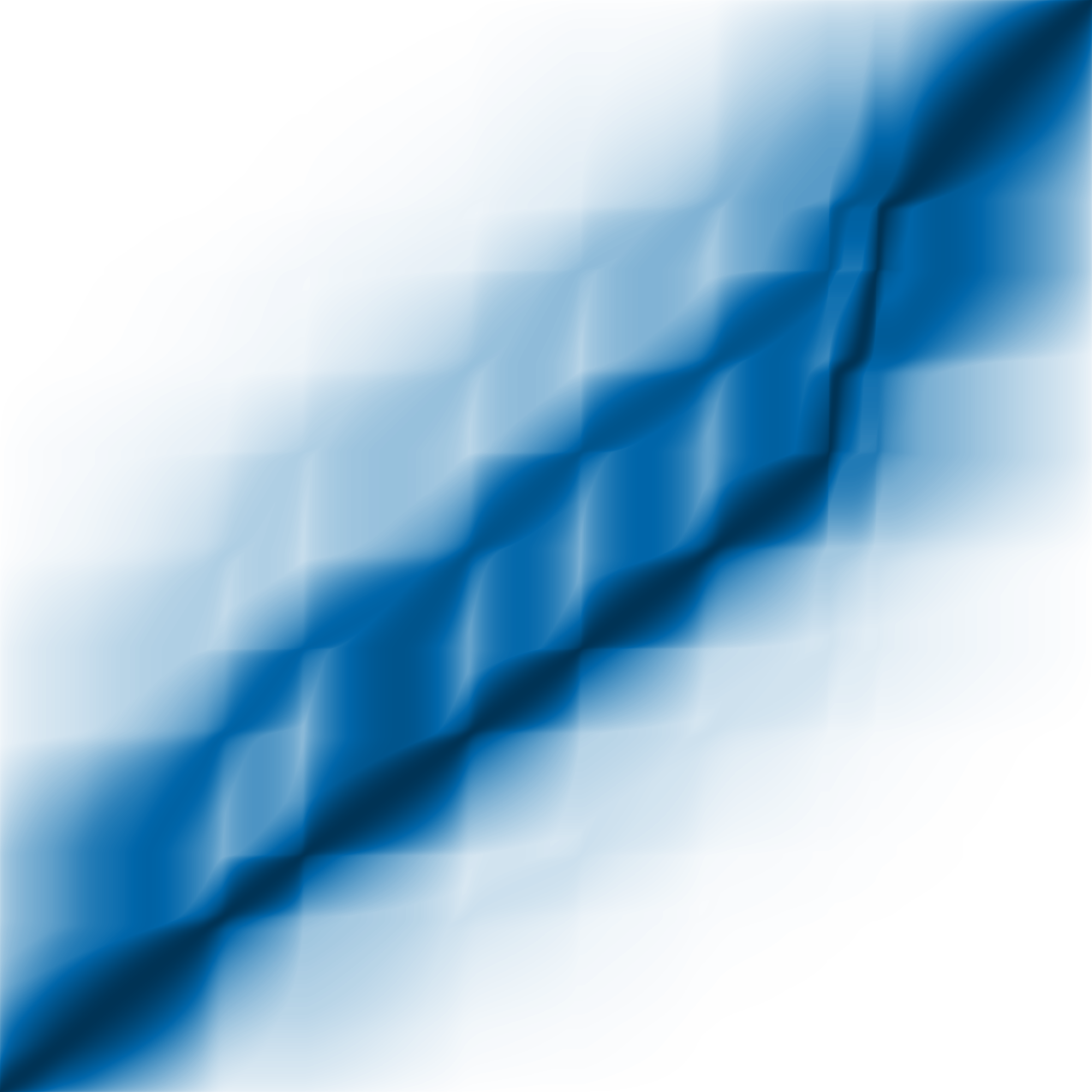};
	\end{axis}

\end{tikzpicture}
		\caption{\, }
		\label{fig:value:u}
	\end{subfigure}
	\begin{subfigure}[b]{.3\textwidth}
		\centering

\begin{tikzpicture}[trim axis left, trim axis right]

	\begin{axis}[
		scale only axis,
		axis equal,
		height = 2.5cm,
		width = 2.5cm,
		at = {(5cm,0cm)},
		xmin=0,
		xmax=1,
		ymin=0,
		ymax=1,
		xtick=\empty,
		ytick=\empty,
		xlabel={$\varphi_1(t)$},
		ylabel={$\varphi_2(t)$},
		ylabel style = {rotate=-90},
		axis on top,
		cycle list name=local lines,
	]
		\foreach \i in {1,...,27} {
			\addplot table {data/value/x\i.txt};
		}
	\end{axis}

\end{tikzpicture}
		\caption{\,}
		\label{fig:value:umax}
	\end{subfigure}
	\begin{subfigure}[b]{.3\textwidth}
		\centering

\begin{tikzpicture}[trim axis left, trim axis right]

	\begin{axis}[
		scale only axis,
		axis equal,
		height = 2.5cm,
		width = 2.5cm,
		at = {(5cm,0cm)},
		xmin=0,
		xmax=1,
		ymin=0,
		ymax=1,
		xtick=\empty,
		ytick=\empty,
		xlabel={$\varphi_1(t)$},
		ylabel={$\varphi_2(t)$},
		ylabel style = {rotate=-90},
		axis on top,
		cycle list name=local lines,
	]
		\addplot table {data/value/x15.txt};
		\addplot table {data/value/x5.txt};
		\addplot table {data/value/x1.txt};
	\end{axis}

\end{tikzpicture}
		\caption{\,}
		\label{fig:value:umax2}
	\end{subfigure}
	\caption{The total value function (a), some local maxima of \eqref{eq:prob} (b), and three local maxima near the diagonal (c).}
	\label{fig:value}
\end{figure}
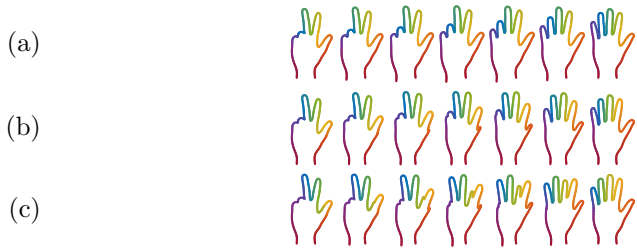
\begin{figure}
	\begin{enumerate}[label*=(\alph*)]
		\item
		\begin{minipage}{10.75cm}\centering

\begin{tikzpicture}[trim axis left, trim axis right,baseline=-0.1cm]
	\begin{axis}[
		%
		%
		at = {(0,0)},
		width = 4.5cm,
		height = 5cm,
		%
		%
		clip=false,
		hide axis,
		axis equal,
		enlargelimits=false,
		y dir=reverse,
	]
		\foreach \x/\y in {1/8,2/9,3/10,4/11,5/12,6/13,7/14} {
			\addplot [geodesic] table [x index = \x, y index = \y, meta index = 0] {data/ex2/geodesics1.txt};
		}
	\end{axis}
\end{tikzpicture}\end{minipage}
	\vspace{.1cm}
		\item 
		\begin{minipage}{10.75cm}\centering

\begin{tikzpicture}[trim axis left, trim axis right,baseline=-0.1cm]
	\begin{axis}[
		%
		%
		at = {(0,0)},
		width = 4.5cm,
		height = 5cm,
		%
		%
		clip=false,
		hide axis,
		axis equal,
		enlargelimits=false,
		y dir=reverse,
	]
		\foreach \x/\y in {1/8,2/9,3/10,4/11,5/12,6/13,7/14} {
			\addplot [geodesic] table [x index = \x, y index = \y, meta index = 0] {data/ex2/geodesics5.txt};
		}
	\end{axis}
\end{tikzpicture}\end{minipage}
	\vspace{.1cm}
		\item
		\begin{minipage}{10.75cm}\centering

\begin{tikzpicture}[trim axis left, trim axis right,baseline=-0.1cm]
	\begin{axis}[
		%
		%
		at = {(0,0)},
		width = 4.5cm,
		height = 5cm,
		%
		%
		clip=false,
		hide axis,
		axis equal,
		enlargelimits=false,
		y dir=reverse,
	]
		\foreach \x/\y in {1/8,2/9,3/10,4/11,5/12,6/13,7/14} {
			\addplot [geodesic] table [x index = \x, y index = \y, meta index = 0] {data/ex2/geodesics15.txt};
		}
	\end{axis}
\end{tikzpicture}\end{minipage}
	\end{enumerate}
	\caption{Pre-shape geodesics with parametrisation corresponding to the three local maxima visualised in \cref{fig:value:umax2}.}
	\label{fig:geodesics2}
\end{figure}

For \cref{exp:b}, the total value function was estimated using \(h=5\cdot10^{-4}\). The estimate is visualised in \cref{fig:value:u} and the local maxima of \eqref{eq:prob} are visualised in \cref{fig:value:u}. Note that a highly nonlinear colormap has been used in \cref{fig:value:u} to accentuate the local maxima. Using this method, 27 local maxima were found. However, this method of finding local maxima is conservative, and there are likely a lot more. We chose three local maxima close to the diagonal, and computed the resulting curve space geodesics. The result is visualised in \cref{fig:geodesics2}. As one can see, the resulting geodesics are very different. A priori, it is hard to tell which one of these solutions a local, gradient based methods will find. This accentuates the importance of global solvers.

\subsection{Convergence of the Value Function}
For the value function, we have theoretical point-wise uniform convergence. Therefore, the natural metric for evaluating convergence is the \(L^\infty\)-error. We approximate this error by a point-wise maximum between \(u_h\) and \(u_\epsilon\) through
\(
	\|u_h-u_\epsilon\|_{L^\infty} \approx \max_{\bm x\in[0,1]_h^2} |u_h(\bm x) - u_\epsilon(\bm x)|.
\)
Since we only consider \(h\) as integer multiples of \(\epsilon\), we have hat \([0,1]_h^2\subset[0,1]_\epsilon^2\), meaning that this can be evaluated exactly. 

The convergence plots can be seen in \cref{fig:convergence:value}. We seem to have numerical convergence for all variables. Among the semi-discretised schemes, \eqref{eq:vinf} performs the best for all test problems. Apart from \cref{exp:c}, which is to some extent less interesting anyways, the scheme \eqref{eq:vinf} also performs better than the discretised dynamic programming. 

In \cite{calder2017}, it was demonstrated that the schemes based on \(D(u^2)\) have a higher numerical convergence rate than the schemes based on \(Du\). At first glance, we do not seem to have this property. However, the difference between the schemes becomes apparent in \cref{exp:b}, where the convergence rate of the scheme \eqref{eq:uinf} flattens out for \(h^{-1}\geq 10^{3}\). There are multiple factors contributing to the error of the schemes: the regularity of \(u\) (not being Lipschitz), the local variation of \(f\) and the number of shocks apparent in the value function. In \cite{calder2017}, the problems considered were very regular, with little to no variation in \(f\) and at most one shock solution. The \cref{exp:a,exp:b,exp:c} are substantially more complex, meaning that the error contributed from the lack of Lipschitz continuity of \(u\) is in most cases irrelevant. 
\begin{figure}[ht]
    \centering

\begin{tikzpicture}[
	trim axis group left,
	trim axis group right,
	line label/.style={%
		text height=4.79pt,
		text depth=0pt,
		anchor=west,
		inner sep=1pt,
		font=\scriptsize,
	}
]
	\begin{groupplot}[
		convergence plot,
		ymin = 1e-4,
		ymax = 1e0,
		group style={
			group size=3 by 1,
			horizontal sep=0.6cm,
			yticklabels at=edge left,
			xticklabels at=edge bottom,
			xlabels at=edge bottom,
			ylabels at=edge left,
		},
	]

	\nextgroupplot[
		title = \labelcref{exp:a},
	]
		\addplot table [y index = 2] {data/ex1/u1.txt};
		\addplot table [y index = 2] {data/ex1/v1.txt};
		\addplot table [y index = 2] {data/ex1/uinf.txt};
		\addplot table [y index = 2] {data/ex1/vinf.txt};
		\addplot table [y index = 2] {data/ex1/ddp.txt};
	\nextgroupplot[
		title = \labelcref{exp:b},
		y axis line style = {draw=none}, 
		y tick style = {draw=none},
	]
		\addplot table [y index = 2] {data/ex2/u1.txt};
		\addplot table [y index = 2] {data/ex2/v1.txt};
		\addplot table [y index = 2] {data/ex2/uinf.txt};
		\addplot table [y index = 2] {data/ex2/vinf.txt};
		\addplot table [y index = 2] {data/ex2/ddp.txt};
		
	\nextgroupplot[
		title = \labelcref{exp:c},
		y axis line style = {draw=none}, 
		y tick style = {draw=none},
	]
		\addplot table [y index = 2] {data/ex3/u1.txt};
		\addplot table [y index = 2] {data/ex3/v1.txt};
		\addplot table [y index = 2] {data/ex3/uinf.txt};
		\addplot table [y index = 2] {data/ex3/vinf.txt};
		\addplot table [y index = 2] {data/ex3/ddp.txt};
	
		\legend{
			\labelcref{eq:u1},
			\labelcref{eq:v1},
			\labelcref{eq:uinf},
			\labelcref{eq:vinf},
			\labelcref{eq:ddp},
		}
	\end{groupplot}
\end{tikzpicture}
    \caption{Convergence of \(u_h\) for \cref{exp:a,exp:b,exp:c}.}
    \label{fig:convergence:value}
\end{figure}

\subsection{Convergence of the Geodesic Distance}

By construction, we have that \(J(\bm\varphi) = u(\bm 1)\) whenever \(\bm\varphi\) is optimal. This gives us two ways to approximate the shape space distance:
\begin{align*}
    \distS([c_1],[c_2]) &\approx \arccos u_h(\bm1), \\
    \distS([c_1],[c_2]) &\approx \arccos J_h(\bm\varphi_h).
\end{align*}
For the fully discretised schemes, these quantities are the same by construction of the scheme. For the semi-discretised schemes, however, these are different quantities and might have different convergence properties.
The approximations were computed for each scheme and step size \(h\). For \cref{exp:a,exp:b}, we measured the error by comparison with \(\arccos u_h(\bm 1)\). For \cref{exp:c}, we have the exact solution \(\arccos u(\bm1) = \arccos J(\bm\varphi) = 0\). Convergence plots can be found in figures \ref{fig:convergence:value1} and \ref{fig:convergence:objective}, respectively.

We seem to have numerical convergence for all methods considered. We observe some cancellation effects, particularly for schemes \ref{eq:uinf} and \ref{eq:ddp} for \cref{exp:a} and \ref{eq:vinf} for \cref{exp:b}. The scheme \ref{eq:vinf} performs the best among the semi-discretised schemes while the fully discretised scheme has quite variable convergence properties.  Generally, it is hard to determine the exact convergence properties as we are essentially solving a PDE, but only measure convergence of the solution at a single point. For \cref{exp:c}, we only seem to have an \(\mathcal O(\sqrt h)\) convergence rate for the semi-discretised schemes. This is due to the non-differentiability of \(\arccos J\) at \(J=1\), which only occurs when the shape space distance is zero.\footnote{The function $\arccos J$ is also non-differentiable at $J = -1$. This value, however, can never occur as the solution of the optimisation problem.}

For \(J_h(\bm\varphi_h)\), all semi-discretised schemes perform almost identically. This might be due to the simple backtracking scheme we have proposed. Higher order backtracking schemes were tested without any significant improvement. For \cref{exp:a}, the convergence is too non-regular for a convergence rate to be estimated, for \cref{exp:b}, we seem to have a superlinear numerical convergence rate, and for \cref{exp:c}, we have a linear numerical convergence rate. Note also that apart from \ref{eq:vinf}, the distance estimates based on \(J_h(\bm\varphi_h)\) are more accurate than those based on \(u_h(\bm 1)\). Finally, also for the distance estimate based on \(u_h(\bm 1)\), we have worse convergence properties for \cref{exp:c} compared to \cref{exp:a,exp:b}. Again, this is explained by the non-differentiability of \(\arccos\). Consequently, we expect the schemes to perform worse for curves with equal shapes than for curves with non-zero shape space distance. 
\begin{figure}[ht]
    \centering

\begin{tikzpicture}[
	trim axis group left,
	trim axis group right,
	line label/.style={%
		text height=4.79pt,
		text depth=0pt,
		anchor=west,
		inner sep=1pt,
		font=\scriptsize,
	}
]
	\begin{groupplot}[
		convergence plot,
		ymin = 1e-4,
		ymax = 1e0,
		group style={
			group size=3 by 1,
			horizontal sep=0.6cm,
			yticklabels at=edge left,
			xticklabels at=edge bottom,
			xlabels at=edge bottom,
			ylabels at=edge left,
		},
	]

	\nextgroupplot[
		title = \labelcref{exp:a},
	]
		\addplot table [y index = 3] {data/ex1/u1.txt};
		\addplot table [y index = 3] {data/ex1/v1.txt};
		\addplot table [y index = 3] {data/ex1/uinf.txt};
		\addplot table [y index = 3] {data/ex1/vinf.txt};
		\addplot table [y index = 3] {data/ex1/ddp.txt};
	\nextgroupplot[
		title = \labelcref{exp:b},
		y axis line style = {draw=none}, 
		y tick style = {draw=none},
	]
		\addplot table [y index = 3] {data/ex2/u1.txt};
		\addplot table [y index = 3] {data/ex2/v1.txt};
		\addplot table [y index = 3] {data/ex2/uinf.txt};
		\addplot table [y index = 3] {data/ex2/vinf.txt};
		\addplot table [y index = 3] {data/ex2/ddp.txt};
		
	\nextgroupplot[
		title = \labelcref{exp:c},
		y axis line style = {draw=none}, 
		y tick style = {draw=none},
	]
		\addplot table [y index = 3] {data/ex3/u1.txt};
		\addplot table [y index = 3] {data/ex3/v1.txt};
		\addplot table [y index = 3] {data/ex3/uinf.txt};
		\addplot table [y index = 3] {data/ex3/vinf.txt};
		\addplot table [y index = 3] {data/ex3/ddp.txt};
	
		\legend{
			\labelcref{eq:u1},
			\labelcref{eq:v1},
			\labelcref{eq:uinf},
			\labelcref{eq:vinf},
			\labelcref{eq:ddp},
		}
	\end{groupplot}
\end{tikzpicture}
    \caption{Convergence of \(\arccos u_h(\bm 1)\) for \cref{exp:a,exp:b,exp:c}.}
    \label{fig:convergence:value1}
\end{figure}
\begin{figure}[ht]
    \centering

\begin{tikzpicture}[
	trim axis group left,
	trim axis group right,
	line label/.style={%
		text height=4.79pt,
		text depth=0pt,
		anchor=west,
		inner sep=1pt,
		font=\scriptsize,
	}
]
	\begin{groupplot}[
		convergence plot,
		ymin = 1e-4,
		ymax = 1e0,
		group style={
			group size=3 by 1,
			horizontal sep=0.6cm,
			yticklabels at=edge left,
			xticklabels at=edge bottom,
			xlabels at=edge bottom,
			ylabels at=edge left,
		},
	]

	\nextgroupplot[
		title = \labelcref{exp:a},
	]
		\addplot table [y index = 4] {data/ex1/u1.txt};
		\addplot table [y index = 4] {data/ex1/v1.txt};
		\addplot table [y index = 4] {data/ex1/uinf.txt};
		\addplot table [y index = 4] {data/ex1/vinf.txt};
		\addplot table [y index = 4] {data/ex1/ddp.txt};
	\nextgroupplot[
		title = \labelcref{exp:b},
		y axis line style = {draw=none},
		y tick style = {draw=none},
	]
		\addplot table [y index = 4] {data/ex2/u1.txt};
		\addplot table [y index = 4] {data/ex2/v1.txt};
		\addplot table [y index = 4] {data/ex2/uinf.txt};
		\addplot table [y index = 4] {data/ex2/vinf.txt};
		\addplot table [y index = 4] {data/ex2/ddp.txt};
		
	\nextgroupplot[
		title = \labelcref{exp:c},
		y axis line style = {draw=none},
		y tick style = {draw=none},
	]
		\addplot table [y index = 4] {data/ex3/u1.txt};
		\addplot table [y index = 4] {data/ex3/v1.txt};
		\addplot table [y index = 4] {data/ex3/uinf.txt};
		\addplot table [y index = 4] {data/ex3/vinf.txt};
		\addplot table [y index = 4] {data/ex3/ddp.txt};
	
		\legend{
			\labelcref{eq:u1},
			\labelcref{eq:v1},
			\labelcref{eq:uinf},
			\labelcref{eq:vinf},
			\labelcref{eq:ddp},
		}
	\end{groupplot}
\end{tikzpicture}
    \caption{Convergence of \(\arccos J_h(\varphi_h)\) for \cref{exp:a,exp:b,exp:c}.}
    \label{fig:convergence:objective}
\end{figure}

\subsection{Convergence of the Geodesics}
Although we have numerical convergence of the geodesic distance estimate, this need not imply numerical convergence of the geodesics. Therefore, we consider numerical convergence of the geodesics as well. Consider the two approximate geodesics
\begin{align*}
	\gamma_h(\tau) &= w_h(1-\tau)q_{1,h} + w_h(\tau) q_{2,h}, \\
	\gamma_\epsilon(\tau) &=  w_\epsilon(1-\tau)q_{1,\epsilon} + w_\epsilon(\tau) q_{2,\epsilon}.
\end{align*}
To measure the difference between these geodesics, we use the maximal pre-shape distance over \(\tau\). Since the unit sphere distance is at most \(\pi\) times larger than the \(L^2\) distance, we have that
\begin{align*}
	\max_{\tau\in[0,1]} \arccos\langle\gamma_h(\tau), \gamma_\epsilon(\tau)\rangle_{L^2}
	&\leq \max_{\tau\in[0,1]} \pi\|\gamma_h(\tau) - \gamma_\epsilon(\tau)\|_{L^2} \\
		&= \pi\max\big\{
			\|\gamma_h(0) - \gamma_\epsilon(0)\|_{L^2},\,
			\|\gamma_h(1) - \gamma_\epsilon(1)\|_{L^2}
		\big\} \\
		&= \pi\max\big\{
			\|q_{1,h}-q_{1,\epsilon}\|_{L^2},\,
			\|q_{2,h}-q_{2,\epsilon}\|_{L^2}
		\big\}.
\end{align*}
In other words, we can easily compute an upper bound to the maximal unit sphere distance between the geodesics. Note that it would be even better to use the maximal \emph{shape space} distance between the geodesics. However, since the shape space distance requires the minimisation of the pre-shape distance, the upper bound is also an upper bound for the shape space distance.

From the convergence plots in \cref{fig:convergence:geodesics}, we observe numerical convergence. Again, we have no observable difference between the semi-discretised schemes. However, in this case, the semi-discretised schemes perform better than the discretised dynamic programming. 
\begin{figure}[ht]
    \centering

\begin{tikzpicture}[
	trim axis group left,
	trim axis group right,
	line label/.style={%
		text height=4.79pt,
		text depth=0pt,
		anchor=west,
		inner sep=1pt,
		font=\scriptsize,
	}
]
	\begin{groupplot}[
		convergence plot,
		ymin = 1e-3,
		ymax = 1e1,
		group style={
			group size=3 by 1,
			horizontal sep=0.6cm,
			yticklabels at=edge left,
			xticklabels at=edge bottom,
			xlabels at=edge bottom,
			ylabels at=edge left,
		},
	]

	\nextgroupplot[
		title = \labelcref{exp:a},
	]
		\addplot table [y index = 5] {data/ex1/u1.txt};
		\addplot table [y index = 5] {data/ex1/v1.txt};
		\addplot table [y index = 5] {data/ex1/uinf.txt};
		\addplot table [y index = 5] {data/ex1/vinf.txt};
		\addplot table [y index = 5] {data/ex1/ddp.txt};
	\nextgroupplot[
		title = \labelcref{exp:b},
		y axis line style = {draw=none}, 
		y tick style = {draw=none},
	]
		\addplot table [y index = 5] {data/ex2/u1.txt};
		\addplot table [y index = 5] {data/ex2/v1.txt};
		\addplot table [y index = 5] {data/ex2/uinf.txt};
		\addplot table [y index = 5] {data/ex2/vinf.txt};
		\addplot table [y index = 5] {data/ex2/ddp.txt};
		
	\nextgroupplot[
		title = \labelcref{exp:c},
		y axis line style = {draw=none}, 
		y tick style = {draw=none},
	]
		\addplot table [y index = 5] {data/ex3/u1.txt};
		\addplot table [y index = 5] {data/ex3/v1.txt};
		\addplot table [y index = 5] {data/ex3/uinf.txt};
		\addplot table [y index = 5] {data/ex3/vinf.txt};
		\addplot table [y index = 5] {data/ex3/ddp.txt};
	
		\legend{
			\labelcref{eq:u1},
			\labelcref{eq:v1},
			\labelcref{eq:uinf},
			\labelcref{eq:vinf},
			\labelcref{eq:ddp},
		}
	\end{groupplot}
\end{tikzpicture}
    \caption{Convergence of \(\gamma_h\) for \cref{exp:a,exp:b,exp:c}.}
    \label{fig:convergence:geodesics}
\end{figure}

\subsection{Computational Complexity}
\label{sec:exp:complexity}
Until now, we have evaluated performance in terms of error vs step size. However, there is a significant difference in computational complexity between the semi-discretised and the fully discretised methods. For the fully discretised dynamic programming scheme, the computational complexity is \(\mathcal O(|A_h|N^{2})\), while for the semi-discretised schemes, the computational complexity is \(\mathcal O(N^{2})\). 
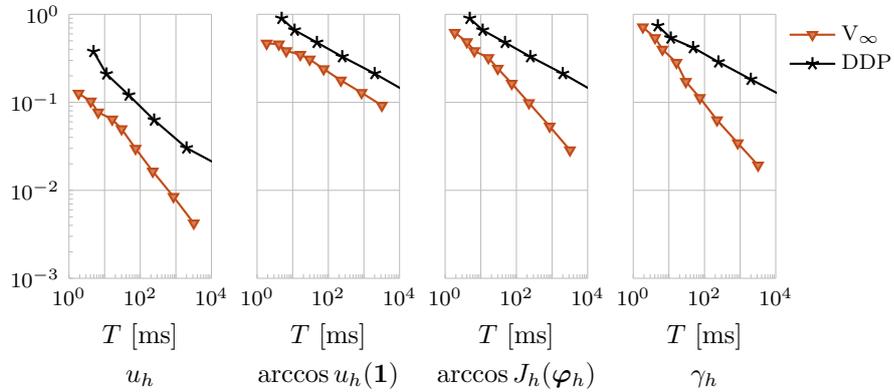
\begin{figure}[ht]
    \centering

\begin{tikzpicture}[
	trim axis group left,
	trim axis group right,
	line label/.style={%
		text height=4.79pt,
		text depth=0pt,
		anchor=west,
		inner sep=1pt,
		font=\scriptsize,
	}
]
	\begin{groupplot}[
		convergence plot,
		xmin=1,
		xlabel={\(T\) [ms]},
		ymin = 1e-3,
		ymax = 1e0,
		width=1.875cm,
		height=3.5  cm,
		max space between ticks=10,
		xticklabels={{\(10^0\)},,{\(10^2\)},,{\(10^4\)}},
		group style={
			group size=4 by 1,
			horizontal sep=0.6cm,
			yticklabels at=edge left,
			xticklabels at=edge bottom,
			xlabels at=edge bottom,
			ylabels at=edge left,
		},
	]
	
	\nextgroupplot[
		title = {\(u_h\vphantom(\)},
	]
		\addplot [line 4] table [x index=1,y index = 2] {data/ex3/vinf.txt};
		\addplot [line 5] table [x index=1,y index = 2] {data/ex3/ddp.txt};
	\nextgroupplot[
		title = {\(\arccos u_h(\bm 1)\)},
		y axis line style = {draw=none}, 
		y tick style = {draw=none},
	]
		\addplot [line 4] table [x index=1,y index = 3] {data/ex3/vinf.txt};
		\addplot [line 5] table [x index=1,y index = 3] {data/ex3/ddp.txt};
	\nextgroupplot[
		title = {\(\arccos J_h(\bm\varphi_h)\)},
		y axis line style = {draw=none}, 
		y tick style = {draw=none},
	]
		\addplot [line 4] table [x index=1,y index = 4] {data/ex3/vinf.txt};
		\addplot [line 5] table [x index=1,y index = 4] {data/ex3/ddp.txt};
	\nextgroupplot[
		title = {\(\gamma_h\vphantom(\)},
		y axis line style = {draw=none}, 
		y tick style = {draw=none},
	]
		\addplot [line 4] table [x index=1,y index = 5] {data/ex3/vinf.txt};
		\addplot [line 5] table [x index=1,y index = 5] {data/ex3/ddp.txt};
			
	\legend{
		\ref{eq:vinf},
		\ref{eq:ddp},
	}
	
	\end{groupplot}
\end{tikzpicture}
    \caption{Work-precision diagrams for \cref{exp:c}. \(T\) denotes the computation time.}
    \label{fig:convergence:complexity}
\end{figure}

To implement the schemes, we used Python using NumPy with vectorised updates. Even though the implementation is to some extent na\"ive, it is useful to evaluate the accuracy of the schemes vs the computation times. From the three test problems, it is \cref{exp:c} where the fully discretised scheme performs best compared to the semi-discretised schemes. Work-precision diagrams for this problem are visualised in \cref{fig:convergence:complexity}. As one can see, the semi-discretised scheme \eqref{eq:vinf} performs significantly better than the fully discretised method.


\section{Conclusion}
In this article, we have shown how PDE based method can be applied to the computation of shape space distances of open shapes. The method has global convergence and runs in \(\mathcal O(N^2)\) time, which is strictly better than existing global solvers. Additionally, the numerical experiments indicate a linear convergence in practice, although we expect a lower theoretical convergence rate.

First, we presented a family of schemes which generalises the schemes of \cite{calder2013,calder2017}. These are based on the Hamilton-Jacobi-Bellman equation for the value function of the problem. However, whereas the schemes of \cite{calder2013,calder2017} approximate the gradient of the value function using finite difference approximations, we approximate its directional derivatives. This allows for greater flexibility in the construction of the schemes. The resulting family of schemes has theoretical convergence, and we show that two instances of the scheme are more accurate than previous approaches. 

In conjunction with the schemes for the value function, we presented a backtracking scheme to obtain the solution of the reparametrisation problem. This is then used to estimate the shape space geodesics numerically. For different problems, the scheme seems to converge numerically, and the work-precision efficiency is better than that of previous global solvers.

From here, there is a number of interesting topics for future work, including the following:
\begin{itemize}
    \item Assessment of the typical \(\mathcal O(\sqrt h)\) convergence rate for the HJB schemes, similar to \cite[schemes S2, S3]{calder2017}.
	\item Assessing theoretical convergence of the backtracking method.
	\item Constructing schemes such as \eqref{eq:scheme} for general HJB equations.
	\item Construction of iterative solvers with adaptive grid refinement, where the HJB equation is solved on smaller and smaller strips around the solution of the reparametrisation problem, as has been done with great success for the fully discretised schemes \cite{dogan2015,dogan2016,dogan2021}.
\end{itemize}

\bibliographystyle{plain}
\bibliography{bib}


\appendix
\section{Approximating the SRVTs}
\label{sec:appf}
The schemes presented in this article are based on exact computation of the forcing term \(f(\bm x)\), which in turns requires access to the SRVTs \(q_1\) and \(q_2\). Whenever these are not available, we can use finite difference approximations of the curves \(c_1\) and \(c_2\). Here, we suggest using backward differences of the form
\[
	q_i(t) \approx
	\frac{c_i(t)-c_i(t-h)}{\sqrt{h|c_i(t)-c_i(t-h)|}},
\]
leading to the approximation
\[
	hf(\bm x) \approx
	\max\Biggl\{\biggl\langle
	\frac{c_1(x_1)-c_1(x_1-h)}{\sqrt{|c_1(x_1)-c_1(x_1-h)|}}
	,
	\frac{c_2(x_2)-c_2(x_2-h)}{\sqrt{|c_2(x_2)-c_2(x_2-h)|}}
	\biggr\rangle
	,\
	0
	\Biggr\}.
\]
For the fully discretised schemes, we suggest using backwards differences of the form
\[
	q_i(t)\sqrt{k} \approx
	\frac{c_i(t)-c_i(t-kh)}{\sqrt{h|c_i(t)-c_i(t-kh)|}},
\]
leading to the approximation
\[
	hf(\bm x)\sqrt{kl} \approx
	\max\Biggl\{\biggl\langle
	\frac{c_1(x_1)-c_1(x_1-kh)}{\sqrt{|c_1(x_1)-c_1(x_1-kh)|}}
	,
	\frac{c_2(x_2)-c_2(x_2-lh)}{\sqrt{|c_2(x_2)-c_2(x_2-lh)|}}
	\biggr\rangle
	,\
	0
	\Biggr\}.
\]
As long as the curves are immersions, i.e., that \(\lvert c_i'\rvert>0\) everywhere, these are consistent approximations, meaning that the proofs for convergence still hold. Moreover, we find that these approximations actually give better convergence properties for all implementations of the schemes.

\section{Convergence for Fully Discretised Schemes}
\label{sec:app}

We express the scheme \eqref{eq:scheme3sol} as the solution of \(S_h=0\) with
\begin{equation}
	\label{eq:scheme3}
	S_h = \max_{\bm\alpha\in A_h} \frac{u(\bm x-h\bm\alpha)-u(\bm x)}{h|\bm\alpha|} + f(\bm x)\frac{\sqrt{\alpha_1\alpha_2}}{|\bm\alpha|}.
\end{equation}
\begin{assumption}
	\label{asm:ah}
	\(A_h\) satisfies the following:
	\begin{subassumption}
		\item \(A_h \subset \mathbb N_0^2 \setminus \{\bm0\}\).
		\item \(A_h\) is finite for all \(h>0\). 
		\item \(\lim_{h\to0} \max_{\bm\alpha\in A_h} h|\bm\alpha| = 0\).
		\label{asm:bounded}
		\item For every \(\bm\beta\in A_2\) and \(\epsilon > 0\), there exists \(h_0>0\) such that for every \(0<h\leq h_0\), there is \(\bm\alpha \in A_h\) with \(|\bm\alpha/|\bm\alpha| - \bm\beta| <\epsilon\).
		\label{asm:consistent}
	\end{subassumption}
\end{assumption}

\begin{theorem}
	\label{thm:scheme3}
	Under \cref{asm:ah}, the scheme \eqref{eq:scheme3} is convergent. 
\end{theorem}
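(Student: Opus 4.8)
The plan is to verify that \eqref{eq:scheme3} again fits the monotone-scheme framework of \cite{barles1991}, exactly as in the proofs of \cref{thm:scheme,thm:scheme2}, so that convergence follows from \cite[Theorem 2.1]{barles1991}. The first observation I would record is that the positive, $\bm\alpha$-dependent normalisation $1/(h|\bm\alpha|)$ does not alter the solution of $S_h=0$: since each summand in the maximum is divided by the strictly positive quantity $h|\bm\alpha|$, the equation $S_h=0$ holds if and only if $u(\bm x-h\bm\alpha)-u(\bm x)+hf(\bm x)\sqrt{\alpha_1\alpha_2}\le 0$ for every $\bm\alpha\in A_h$ with equality for at least one, i.e.\ if and only if $u$ solves the dynamic programming recursion \eqref{eq:ddp}. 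Because $\bm\alpha\in\mathbb N_0^2\setminus\{\bm0\}$ forces $\bm x-h\bm\alpha\neq\bm x$, this recursion is explicit: sweeping the grid in order of increasing index sum $i+j$ determines $u_h$ uniquely from the boundary data $u_h=0$ on the axes.

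Monotonicity is then immediate, as inside the maximum the off-diagonal value $u(\bm x-h\bm\alpha)$ enters with the nonnegative weight $1/(h|\bm\alpha|)$ while $u(\bm x)$ enters with a nonpositive weight, and $\bm x-h\bm\alpha$ is always a grid point distinct from $\bm x$. For stability I would bound $u_h$ along backtracking paths in the directed acyclic graph on $[0,1]_h^2$: a backward path from $\bm x$ to the boundary consists of steps $h\bm\alpha=h(k,l)$ whose sum is at most $\bm x$ componentwise, so $\sum h(k+l)\le x_1+x_2\le 2$; using $\sqrt{kl}\le(k+l)/2$, the accumulated cost satisfies $0\le u_h(\bm x)\le\|f\|_\infty\sum h(k+l)/2\le\|f\|_\infty$, which is the required $h$-independent bound.

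The substance of the proof is consistency. Testing against a smooth $\psi$ and a constant $\xi$, the constant cancels and a Taylor expansion gives, with $\bm\beta:=\bm\alpha/|\bm\alpha|\in A_2$,
\[
	\frac{\psi(\bm y-h\bm\alpha)-\psi(\bm y)}{h|\bm\alpha|}
	= -D\psi(\bm y)\cdot\bm\beta + O(h|\bm\alpha|),
	\qquad
	f(\bm y)\frac{\sqrt{\alpha_1\alpha_2}}{|\bm\alpha|} = f(\bm y)\sqrt{\beta_1\beta_2},
\]
so that
\[
	S_h(\bm y,\psi(\bm y)+\xi,\psi+\xi)
	= \max_{\bm\alpha\in A_h}\Bigl[-D\psi(\bm y)\cdot\bm\beta + f(\bm y)\sqrt{\beta_1\beta_2}\Bigr]
	+ O\Bigl(\max_{\bm\alpha\in A_h} h|\bm\alpha|\Bigr).
\]
I would then identify the limit with the Hamiltonian $H$ built from $A=A_2$: the remainder vanishes by \cref{asm:bounded}; every normalised direction $\bm\beta$ already lies in $A_2$, giving $\limsup S_h\le H$; and by the density statement \cref{asm:consistent}, for each $\bm\beta\in A_2$ there exist $\bm\alpha\in A_h$ whose normalisation approaches $\bm\beta$, giving $\liminf S_h\ge H$. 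Continuity of $f$ and $D\psi$, together with the uniform control of the remainder, handles the joint limit $h\to0$, $\bm y\to\bm x$, $\xi\to0$, and the two one-sided bounds squeeze $S_h$ to $H(\bm x,D\psi(\bm x))$. Since $A_2$ satisfies \cref{asm:ahjb} (indeed $\overlinecone A_2=\mathbb R_{\ge0}^2$ and $\inf_{A_2}|\bm\alpha|=1>0$), \cref{thm:hjb} identifies this limit with the value function $u$, and \cite[Theorem 2.1]{barles1991} yields $u_h\to u$.

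I expect the consistency step to be the only genuine obstacle, and specifically the joint use of the two parts of \cref{asm:ah}: the truncation remainder is $O(h|\bm\alpha|)$ and must be controlled uniformly even though $|\bm\alpha|$ is permitted to grow (e.g.\ like $h^{-r}$), which is exactly what \cref{asm:bounded} provides, while \cref{asm:consistent} is simultaneously needed so that the finite, integer-valued direction set does not lose any direction of $A_2$ in the limit. Balancing these competing requirements---a stencil large enough for density but not so large that the first-order truncation error survives---is the crux; everything else reduces to the template already established in \cref{thm:scheme}.
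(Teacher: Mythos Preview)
Your proposal is correct and follows essentially the same approach as the paper: verify monotonicity, stability (via the inequality $\sqrt{kl}\le(k+l)/2$ and a telescoping/inductive bound along a backward path to obtain $0\le u_h\le\|f\|_\infty$), and consistency (Taylor expand, control the remainder with \cref{asm:bounded}, recover all directions of $A_2$ via \cref{asm:consistent}), then invoke \cite[Theorem 2.1]{barles1991}. Your write-up is in fact slightly more detailed than the paper's---the observation that the normalisation $1/(h|\bm\alpha|)$ leaves the solution of $S_h=0$ unchanged, the explicit $\limsup$/$\liminf$ squeeze, and the remark that $A_2$ satisfies \cref{asm:ahjb}---but the argument is the same.
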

\begin{proof}
	The scheme satisfies the following properties:
	\begin{itemize}
		\item\emph{Monotonicity}: \(S_h\) is clearly non-decreasing in \(u(\bm y)\). 
		
		\item\emph{Stability}: 
		For all grid points \(\bm x\), there exists a grid point \(\bm y< \bm x\), such that 
		\begin{align*}
			u_h(\bm y) \leq u_h(\bm x) 
				&= u_h(\bm y) + f(\bm x)\sqrt{(x_1-y_1)(x_2-y_2)} \\
				&\leq u_h(\bm y) + \|f\|_\infty \frac12(x_1-y_1+x_2-y_2)
		\end{align*}
		using the Cauchy-Schwarz inequality. 
		Inductively, this gives that \(0\leq u(\bm x) \leq \|f\|_\infty\). 	
		
		\item\emph{Consistency}: We have that
		\[
		\begin{aligned}
			S_h\big(\bm y,\psi(\bm y)+\xi,\psi+\xi\big)
				&= 
					\max_{\bm\alpha\in A_h} 
					\frac{\psi(\bm y-h\bm\alpha) - \psi(\bm y)}{h|\bm\alpha|} + 
					f(\bm y)\frac{\sqrt{\alpha_1\alpha_2}}{|\bm\alpha|} \\
				&= 
					\max_{\bm\alpha\in A_h} 
					-D\psi(\bm y)\frac{\bm\alpha}{|\bm\alpha|} + O(h|\bm\alpha|)+ 
					f(\bm y)\frac{\sqrt{\alpha_1\alpha_2}}{|\bm\alpha|}. \\
		\end{aligned}
		\]
		Hence, due to  \cref{asm:bounded}, we have that \(\max_{\bm\alpha\in A_h}O(h|\bm\alpha|) = o(1)\). Moreover, we have that \(D\psi\) and \(f\) is uniformly continuous in \(y\). This, combined with \cref{asm:consistent}, gives that
		\[
			\lim_{\subalign{h&\to0\\\bm y&\to\bm x\\\xi&\to0} }
			S_h\big(\bm y,\psi(\bm y)+\xi,\psi+\xi\big)= 
					\max_{\bm\alpha\in A_2} 
					-D\psi(\bm x)\bm\alpha+ 
					f(\bm x)\sqrt{\alpha_1\alpha_2}
		\]
	\end{itemize}
	Due to \cite[Theorem 2.1]{barles1991}, this proves convergence. 
\end{proof}

\end{document}